\newtheorem{theorem}{Theorem}[section]
\newtheorem{lemma}[theorem]{Lemma}
\newtheorem{prop}{Proposition}[section]
\newtheorem{definition}{Definition}[section]
\newtheorem{assumption}{Assumption}[section]
\newtheorem{remark}{Remark}[section]
\numberwithin{equation}{section}
\title{Boundary conditions for hyperbolic relaxation systems with characteristic boundaries}
\author{
Yizhou Zhou \thanks{IGPM, RWTH Aachen University, Templergraben, 55, D-52062 Aachen, Germany (zhou@igpm.rwth-aachen.de)} \qquad 
Wen-An Yong \thanks{Department of Mathematical Sciences, Tsinghua University, Beijing 100084, China;
Beijing Institute of Mathematical Sciences and Applications, Beijing 101408, China (wayong@tsinghua.edu.cn)} 
}
\date{}
\begin{document}
\maketitle{}

\begin{abstract}
This paper is concerned with initial-boundary-value problems of general multi-dimensional hyperbolic relaxation systems with characteristic boundaries. 
For the characteristic case, we redefine a Generalized Kreiss condition (GKC) which is essentially necessary to have a well-behaved relaxation limit. 
Under this characteristic GKC and a Shizuta-Kawashima-like condition, we 
derive reduced boundary conditions for the relaxation limit solving the corresponding equilibrium systems 
and justify the validity thereof.
The key of the derivation is to select an elaborate version
of the characteristic GKC by invoking the Shizuta-Kawashima-like condition. 
In contrast to the existing results, the present one does not assume that the boundary is non-characteristic for either the relaxation or equilibrium systems. In this sense, this paper completes the task in deriving reduced boundary conditions for general linear relaxation systems satisfying the structural stability condition.

\end{abstract}

\hspace{-0.5cm}\textbf{Keywords:}
\small{Hyperbolic relaxation system; Reduced boundary condition; Characteristic boundary; Kreiss condition; Shizuta-Kawashima-like condition}\\

\hspace{-0.5cm}\textbf{AMS subject classification:} \small{35L50; 76N20}

\section{Introduction}

Consider first-order partial differential equations (PDEs) with a small parameter:
\begin{eqnarray}\label{1.1}
\begin{aligned}
    U_t + \sum_{j=1}^d F_j(U)_{x_j} &= \frac{1}{\epsilon} Q(U)
\end{aligned}
\end{eqnarray}
defined in $t>0$ and $x=(x_1,x_2,...,x_d) \in \Omega \subset \mathbb{R}^n$. 
Here $U=U(x,t)\in \mathbb{R}^n$ is the unknown vector-valued function, $F_j(U)~(j=1,2,...,d)$ and $Q(U)$ are smooth functions of $U$, and $\epsilon$ is a small positive parameter called the relaxation time. Such equations are referred to as {\it relaxation systems} and describe many important non-equilibrium processes including chemically reactive flows \cite{GM}, compressible viscoelastic flows \cite{CS,Y5}, traffic flows \cite{BK,HR}.
They also arise in non-equilibrium thermo-dynamics \cite{JCL,mu,ZHYY}, the kinetic theory \cite{CFL,Ga,Le,Mi}, nonlinear optics \cite{CH}, neuroscience \cite{CBF} and so on.

For such a small-parameter problem, the main interest is to study the limit as $\epsilon$ tends to zero, the so-called zero relaxation limit. 
To see the limit, we notice that the source term in the aforementioned examples usually has (or can be easily transformed into)
the form
$$
Q(U) =
\begin{pmatrix}
  0 \\
q(U)  
\end{pmatrix}
$$
with $q(U) \in \mathbb{R}^r$ consisting of $r$ linearly independent functions of $U$. 
Accordingly, we write
$$
U = \begin{pmatrix}
    u \\[1mm]
    v
\end{pmatrix},\quad 
F_j(U) = 
\begin{pmatrix}
    f_j(u,v) \\[1mm]
    g_j(u,v)
\end{pmatrix},\quad 
Q(U) = \begin{pmatrix}
  0 \\[1mm]
q(u,v)  
\end{pmatrix}.
$$
With this partition, it is easy to see that the (formal) limit satisfies $q(u,v)=0$.
From this equation one can usually get $v=h(u)$.
Thus the limit can be determined by solving the so-called
{\it equilibrium system}
\begin{equation*}
u_t+\sum_{j=1}^d f_i(u,h(u))_{x_j} = 0.
\end{equation*}
For initial-value problems, this formal limit was justified in \cite{Y-Phd,Y1} under the structural stability condition proposed therein.
Moreover, it was shown in \cite{Y-Phd,Y1,Y3} that the stability condition holds for almost all examples mentioned above and implies the hyperbolicity of both the relaxation and equilibrium systems.
In what follows, we only consider the systems satisfying the structural stability condition.

The present work focuses on initial-boundary-value problems (IBVPs) with prescribed boundary conditions.
In this case, a fundamental task is to seek boundary conditions (called {\it reduced boundary conditions, rBCs}) for the equilibrium system. Without such boundary conditions, the equilibrium system alone is generally inadequate to determine the relaxation limit even if it exists,
which is in contrast to the initial-value problems.
To illustrate this task, we consider a simple example
\begin{eqnarray}\label{example}
\begin{aligned}
u^{\epsilon}_t + 3u^{\epsilon}_x + v^{\epsilon}_x &= 0,\\[1mm]
v^{\epsilon}_t + u^{\epsilon}_x + v^{\epsilon}_x &= -v^{\epsilon}/\epsilon
\end{aligned}
\end{eqnarray}
defined for $x \geq 0$. 
Since the coefficient matrix has two positive eigenvalues, the classical theory \cite{BS,GKO} requires two boundary
conditions
$$
u^{\epsilon}(0, t)=g(t),\quad v^{\epsilon}(0, t) = h(t)
$$
and proper initial data for \eqref{example} to solve $(u^\epsilon,v^\epsilon)$ with each fixed $\epsilon > 0$. 
It is easy to see that the formal limit $(u^0,v^0)$
satisfies $v^0=0$ and the equilibrium system
\begin{equation}\label{example-equi}
u^0_t + 3u^0_x = 0,\qquad x>0.
\end{equation}
To determine the limit $u^0$ on $x>0$, a boundary condition is indispensable for \eqref{example-equi}. 
Remark that this boundary condition should be 
determined merely by the system \eqref{example} and its prescribed initial and boundary conditions, since so is the solution sequence $(u^\epsilon,v^\epsilon)$. 
Because $u^{\epsilon}(0, t) = g(t)$ for each fixed $\epsilon$, it is natural to guess that
$u^0(0, t) = g(t)$. 
However, this is wrong if $h(t) \neq 0$ and the correct boundary condition is
$$
u^0(0, t) = g(t) +
\frac{h(t)}{3}.
$$
This example indicates that deriving the rBC is not trivial.

The above fundamental task was proposed in \cite{Y-Phd} over three decades ago  and partially resolved under a non-characteristic assumption \cite{Y-Phd,Y2}. In this pioneered work 
it was observed that the structural stability condition, together with the Uniform Kreiss condition (UKC), is not enough for the existence of relaxation limit for IBVPs. 
Note that the UKC is an essentially necessary criterion for the well-posedness of (multi-dimensional) hyperbolic systems \cite{Hi,MO}. 
To remedy this, a so-called Generalized Kreiss condition (GKC) was proposed in \cite{Y-Phd,Y2} for linear version of \eqref{1.1} with constant coefficients: 
\begin{eqnarray}\label{2.1}
\begin{aligned}
    F_j(U)=A_jU,\quad Q(U)=QU.
\end{aligned}
\end{eqnarray}
Here $A_j~(j=1,2,...,d)$ and $Q$ are constant matrices. 
Like the UKC for the well-posedness, the GKC is essentially necessary for the existence of relaxation limit. Under the GKC, the reduced boundary condition was obtained. 

The goal of the present work is to remove the non-characteristic assumption in \cite{Y2}. 
To proceed, we follow \cite{Kr,MO,Y2} and assume  
$\Omega=\{x_1>0\}$. For such a domain, the boundary $\partial\Omega = \{x_1=0\}$ is called characteristic for \eqref{1.1} with \eqref{2.1} if the coefficient matrix $A_1$ has zero eigenvalues. 
Because the boundary could be characteristic for either the relaxation system or the equilibrium system, there are the following four different cases: 
\begin{table}[H]
\centering
\small
\begin{tabular}{|c|c|c|c|c|}
\hline
&&&&\\[-1em]
Relaxation system & \quad N ~\quad & \quad C ~\quad & \quad N ~\quad & C \\
\hline
&&&&\\[-1em]
Equilibrium system & \quad N ~\quad & \quad N ~\quad & \quad C ~\quad & C \\
\hline
&&&&\\[-1em]
solved in & \cite{Y2} & \cite{ZY} & \cite{ZY3} & Present work \\
\hline
\end{tabular}
\\[3mm]
N: non-characteristic, C: characteristic
\end{table}
\noindent 
The non-characteristic assumption in \cite{Y2} means that 
the boundary is non-characteristic for both the relaxation and equilibrium systems, corresponding to the first column of the table. In \cite{ZY,ZY3}, the boundary is assumed to be non-characteristic only for one of the two systems. 
The present work allows the boundary to be characteristic for both the systems. In this sense, the non-characteristic assumptions in \cite{Y2,ZY,ZY3} are completely removed.

At this point, let us make the following remarks. (1) Characteristic IBVPs are classical \cite{MO} and the double characteristic case occurs in many situations, such as the Grad's moment system defined in the half-space $x_1>0$ \cite{LY}.
(2) Compared to the non-characteristic cases, characteristic ones possess new mathematical challenges. At least, the GKC or UKC needs to be redefined because they involve the inverse of $A_1$.
(3) It is a common practice that the theory of hyperbolic IBVPs starts with linear problems with constant coefficients defined in the half space $\{x_1>0\}$ \cite{BS,Kr,MO}. In fact, the IBVPs in a general smooth domain can be converted to finitely many similar problems in the half-space and one in the
whole space by means of a local
coordinate change and a partition of unity \cite{MO}. Therefore, the half-space is representative.
On the other hand, the linear theory has its own interest and turns out to be essential for studying nonlinear problems \cite{Majda1,Majda2}. (4) Last but not least, deriving the rBCs is a fundamental task also for studying singular limiting problems of other PDEs.

In this work, we redefine the crucial GKC 
by combining the ideas in \cite{MO,Y2}. 
Under this characteristic-GKC (c-GKC) and a Shizuta-Kawashima-like condition used in \cite{AY}, we 
derive rBCs for the relaxation limit solving the corresponding equilibrium system. 
To justify the derivation,
we show that the rBCs satisfy the (characteristic) UKC \cite{MO} for the equilibrium system. Particularly, our rBCs do not involve the characteristic modes corresponding to the zero eigenvalue for the equilibrium system. Details can be founded in Theorem \ref{theorem43}.
The key step of the derivation is to select an elaborate version of the c-GKC by invoking the Shizuta-Kawashima-like condition. Having this version, we can adapt the subtle analytical expansions of matrices in \cite{ZY3} to the present problem. 
At last, the validity of the rBC is established by using the energy method in \cite{GKO}.

Now we briefly review the existing literature on IBVPs of hyperbolic relaxation systems. Unlike ours, all of those works are about specific relaxation systems. For the Jin-Xin model \cite{JX}, there are a series of papers \cite{WX,XX,XX1,Xu} devoted to studying the relaxation limit and boundary-layer behaviors. 
About the one-dimensional Kerr-Debye model in nonlinear optics, the authors in \cite{CH} studied the relaxation limit of a specific IBVP. 
In \cite{BK}, the authors derived the rBC for a nonlinear discrete-velocity model of traffic flows by solving a boundary Riemann problem. 
For the Grad's moment closure systems with Maxwell boundary conditions, 
the GKC is verified in \cite{Zhao} for a simplified moment system and the solvability of boundary-layer equations for some flow problems is investigated in \cite{LY}. Particularly, the Shizuta-Kawashima-like condition was tacitly verified in \cite{LY}. 
The interested reader is referred to \cite{CY,Xu1,ZY4} for further related works.

This paper is organized as follows. Basic assumptions and notations are listed in Section \ref{Section2}. In Section \ref{sec3}, we present the characteristic Generalized Kreiss condition for IBVPs of relaxation systems. Section \ref{section4} is devoted to the derivation of the reduced boundary conditions. Finally, in Section \ref{section5} we prove the validity of the reduced boundary condition.

\section{Basic assumptions}\label{Section2}

In this section, we list all the key assumptions used in this paper. 
\subsection{Assumptions for the relaxation system}\label{Section2.1}

For the linear system \eqref{1.1} with constant coefficients \eqref{2.1}:
\begin{eqnarray}\label{new-2.1}
\begin{aligned}
    U_t + \sum_{j=1}^d A_j U_{x_j} &= \frac{1}{\epsilon} Q U,
\end{aligned}
\end{eqnarray}
we firstly recall the structural stability condition. 
It should be noted that the stability condition was proposed in \cite{Y-Phd,Y1} originally for nonlinear problems. 

\begin{definition}[Structural stability condition] This condition consists of the following three items:\\
(i) There is an invertible $n\times n$ matrix $P$ and an invertible $r\times r$ matrix $S$ 
such that 
$$
PQ=
\begin{pmatrix}
    0 & 0 \\[1mm]
  0 & S
\end{pmatrix}P.
$$

\noindent (ii) There exists a symmetric positive definite matrix $A_0$, called the symmetrizer, such that
$$
A_0A_j = A_j^TA_0,\qquad j=1,2,...,d.
$$ 
Here and below the superscript $T$ means the transpose of matrices or vectors.

\noindent (iii) The hyperbolic part and the source term are coupled in the following sense
$$
A_0Q+Q^TA_0\leq -P^T
\begin{pmatrix}
  0 & 0 \\[1mm]
  0 & I_r
\end{pmatrix}P.
$$
Here $I_r$ is the unit matrix of order $r$.
\end{definition}

Additionally, we also assume that the symmetrizer $A_0$ and the source term $Q$ satisfy
\begin{equation}\label{2.2}
  A_0 Q =Q^T A_0, 
\end{equation} 
This additional condition corresponds to the celebrated Onsager reciprocal relation and 
is satisfied by a large class of physically motivated relaxation models as shown in \cite{Y3}.

Under the structural stability condition and \eqref{2.2}, we may as well assume that the matrices $A_j\ (j=1,2,...,d)$ are all symmetric, the symmetrizer $A_0$ equals to $I_n$, and $Q=\text{diag}(0,S)$ with $S$ a symmetric negative definite matrix (see the discussion in Section 2 of \cite{ZY3}). 
Corresponding to the partition of $Q$, we write  
\begin{equation}\label{partA}
A_1 = 
\begin{pmatrix}
A_{11} & A_{12} \\[2mm]
A_{12}^T & A_{22}    
\end{pmatrix}, \quad 
A_j = 
\begin{pmatrix}
A_{j11} & A_{j12} \\[2mm]
A_{j12}^T & A_{j22}    
\end{pmatrix},~~j=2,...,d.
\end{equation}
Furthermore, we need 
\begin{definition}[Shizuta-Kawashima-like condition]
The kernel of matrix $Q$ does not contain the eigenvectors of $A_1$ associated with the zero eigenvalue:
\begin{equation}\label{kawa}
    \ker (A_1) \cap \ker (Q) = \{0\}.
\end{equation}       
\end{definition}

Remark that \eqref{kawa} is much weaker than the usual Shizuta-Kawashima condition in \cite{SK}:
$$
\ker \left(\sum_{j=1}^d\omega_j A_j - \lambda I_n \right) \cap \ker (Q) = \{0\}
$$
for any $\lambda\in \mathbb{R}$ and any $(\omega_1,\omega_2,...,\omega_d) \in \mathbb{R}^d\setminus \{0\}$.
This weaker version was also used in \cite{AY} for the existence of shock profiles. It is satisfied by all the three cases studied in \cite{Y2,ZY,ZY3}. Indeed, when $A_1$ is invertible as assumed in \cite{Y2,ZY3}, then \eqref{kawa} is obviously true because $\ker(A_1)=\{0\}$; when $A_{11}$ is invertible as in \cite{ZY}, it can be seen from the partition \eqref{partA} that $\ker(A_1)$ does not contain any nonzero vectors in $\ker(Q)=\mathbb{R}^{n-r} \times \{0\}$.

We end this subsection with the 
following equivalent version of the condition \eqref{kawa}. 

\begin{prop}\label{new-prop2.1}
The Shizuta-Kawashima-like condition is equivalent to the invertibility of the matrix $R_0^TQR_0$. Here $R_0\in \mathbb{R}^{n\times n_0}$ consists of $n_0$ eigenvectors of $A_1$ associated with the zero eigenvalue of multiplicity $n_0$.
\end{prop}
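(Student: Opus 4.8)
The plan is to show the two-way implication by exploiting the spectral decomposition induced by the zero eigenvalue of the symmetric matrix $A_1$. Since we are in the reduced setting where $A_1$ is symmetric and $Q=\diag(0,S)$ with $S$ symmetric negative definite, I would first fix an orthonormal basis of $\ker(A_1)$, arranged as the columns of $R_0\in\mathbb{R}^{n\times n_0}$, and complete it to an orthonormal basis of $\mathbb{R}^n$ with columns of $R_+\in\mathbb{R}^{n\times(n-n_0)}$ spanning the orthogonal complement, which (by symmetry of $A_1$) is the span of the eigenvectors for the nonzero eigenvalues. Then $[R_0\ R_+]$ is orthogonal and any $w\in\mathbb{R}^n$ decomposes uniquely as $w=R_0\xi+R_+\eta$.

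Next I would translate the condition $\ker(A_1)\cap\ker(Q)=\{0\}$ into a statement about $\xi$. A vector $w=R_0\xi\in\ker(A_1)$ lies in $\ker(Q)$ if and only if $QR_0\xi=0$. The key observation is that, because $Q$ is symmetric and negative semidefinite (its nonzero block $S$ is negative definite), $QR_0\xi=0$ is equivalent to $\xi^TR_0^TQR_0\xi=0$: indeed, writing $Q=-M^TM$ is not quite available since $Q$ is only negative \emph{semi}definite, but one can instead argue directly that for a symmetric negative semidefinite matrix $Q$ one has $Qy=0\iff y^TQy=0$ (the ``square root'' of $-Q$ kills $y$ iff the quadratic form vanishes). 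Applying this with $y=R_0\xi$ gives: $R_0\xi\in\ker(A_1)\cap\ker(Q)$ with $\xi\neq0$ exists $\iff$ there is $\xi\neq0$ with $\xi^T(R_0^TQR_0)\xi=0$.

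The final step is to note that $R_0^TQR_0$ is itself symmetric and negative semidefinite (as the compression of the negative semidefinite $Q$ to the subspace $\mathrm{range}(R_0)$), so the same equivalence applies to it: $R_0^TQR_0$ is invertible $\iff$ it is negative definite $\iff$ there is no $\xi\neq0$ with $\xi^T(R_0^TQR_0)\xi=0$. Chaining the equivalences, $\ker(A_1)\cap\ker(Q)=\{0\}\iff R_0^TQR_0$ is invertible, which is the claim. One should also check the statement is independent of the particular choice of $R_0$: any two such $R_0$, $R_0'$ differ by $R_0'=R_0T$ for an invertible $n_0\times n_0$ matrix $T$ (nonsingular since both bases span the same $n_0$-dimensional kernel), so $R_0'^TQR_0'=T^T(R_0^TQR_0)T$ has the same rank.

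The main obstacle, such as it is, is the passage $Qy=0\iff y^TQy=0$ for negative semidefinite symmetric $Q$; this is the one place where the sign-definiteness (not just symmetry) of the source term is essential, and it is worth stating cleanly — e.g.\ diagonalize $Q=\mathcal{O}\Lambda\mathcal{O}^T$ with $\Lambda\leq0$ diagonal, so $y^TQy=\sum_k\lambda_k(\mathcal{O}^Ty)_k^2=0$ forces $(\mathcal{O}^Ty)_k=0$ whenever $\lambda_k<0$, hence $\Lambda\mathcal{O}^Ty=0$, i.e.\ $Qy=0$. Everything else is linear algebra bookkeeping with the orthogonal change of basis. If one preferred to avoid choosing coordinates, an alternative is to observe that $\ker(R_0^TQR_0)=\{\xi:\ R_0\xi\in\ker(Q)\}$ directly, using $\mathrm{range}(R_0)^\perp\cap\ker(Q)$ considerations, but the diagonalization argument above is the cleanest.
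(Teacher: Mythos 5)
Your argument is correct and is essentially the paper's proof: both hinge on the observation that, because $Q$ is symmetric negative semidefinite (its only nonzero block being $S<0$), the vanishing of the quadratic form $\xi^T R_0^TQR_0\,\xi$ forces $QR_0\xi=0$, combined with the fact that $R_0$ has full column rank and spans $\ker(A_1)$. The paper phrases this via the block structure $Q=\mathrm{diag}(0,S)$ rather than diagonalizing $Q$, and omits the (correct but unnecessary) orthonormality and basis-independence remarks, but the substance is the same.
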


\begin{proof}

If $R_0^TQR_0$ is not invertible, there exists a nonzero $x\in \mathbb{R}^{n_0}$ such that 
$x^TR_0^TQR_0x = 0$. 
By the expression $Q=\text{diag}(0,S)$ with $S<0$, we deduce that 
$R_0x\in\text{ker} (Q)$. This contradicts to \eqref{kawa} since $A_1R_0x=0$.
Conversely, if there is a nonzero $x\in \mathbb{R}^n$ such that $A_1x=Qx=0$, then there exists a nonzero $y \in \mathbb{R}^{n_0}$ such that $x=R_0y$. 
Thus $R_0^TQR_0y=R_0^TQx=0$. This completes the proof.
\end{proof}

\subsection{Assumptions for the boundary condition}\label{section2.3}

For the relaxation system \eqref{new-2.1} defined in the half-space $\{x_1>0\}$, we allow the boundary $x_1=0$ to be characteristic. Namely, the symmetric matrix $A_1$ may have zero as its eigenvalue. Denote by
\begin{align*}
&n_0 = \text{the multiplicity of the zero eigenvalue of}~ A_1,\\[2mm]
&n_+ = \text{the number of positive eigenvalues of}~ A_1.
\end{align*}

According to the classical theory \cite{BS,GKO} for IBVPs of first-order hyperbolic systems, $n_+$ boundary conditions 
\begin{equation}\label{Boundary conditions}
    BU(0,\hat{x},t) = b(\hat{x},t)
\end{equation}
should be prescribed at the boundary $x_1=0$ for \eqref{new-2.1}.
Here $\hat{x} = (x_2,x_3,...,x_d)$
and $B$ is a constant $n_+\times n$-matrix of full-rank. Moreover, this boundary condition should satisfy the Uniform Kreiss Condition (UKC) \cite{Kr,MO} and 
\begin{equation}\label{2.3}
BR_0 = 0.
\end{equation}
for the well-posedness. The relation \eqref{2.3} means that the characteristic mode corresponding to the zero eigenvalue is not involved in the boundary condition. Its necessity was illustrated in \cite{MO} for the characteristic IBVPs.

In studying the relaxation limit as $\epsilon\rightarrow 0$, it was observed in \cite{Y2} that the UKC and the structural stability condition are not enough to guarantee the existence of relaxation limit for IBVPs. To remedy this, a so-called Generalized Kreiss condition (GKC) was proposed in \cite{Y2}. 
To state the GKC, we recall the following terminology introduced in \cite{Y2}. 
\begin{definition}
Let $n\times n$-matrix $M$ have precisely $k\ (0\leq k\leq n)$ stable
eigenvalues. A full-rank $n\times k$-matrix $R_M^S$
is called a right-stable matrix of $M$ if
$$
MR_M^S=R_M^SS_M,
$$
where $S_M$ is a $k\times k$ stable matrix. Similarly, we can define the right-unstable matrix $R^U_M$.
\end{definition}

For the non-characteristic case where  $A_1$ is invertible, we can define
\begin{equation}\label{M-origin}
M=M(\xi,\omega,\eta) = A_1^{-1}\left(\eta Q - \xi I_n - i \sum_{j=2}^d \omega_j A_j\right)
\end{equation}
for $\eta \geq 0$, $\xi $ a complex number satisfying $Re\xi > 0$, and $\omega=(\omega_2,...,\omega_d)\in \mathbb{R}^{d-1}$. Denote by $R_M^S(\xi,\omega,\eta)$ the right-stable matrix for $M=M(\xi,\omega,\eta)$. By Lemma 2.3 in \cite{Y2}, the matrix $M(\xi,\omega,\eta)$ has $n_+$ stable eigenvalues and thereby $R_M^S(\xi,\omega,\eta)$ is an $n\times n_+$-matrix.
With this notation, the GKC can be stated as
\begin{definition}[\textbf{Generalized Kreiss condition} \cite{Y2}]\label{GKC-origin}
    There exists a constant $c_K>0$ such that 
    $$
    |\text{det}\{BR_M^S(\xi,\omega,\eta) \}|\geq c_K\sqrt{\text{det}\{R_M^{S*}(\xi,\omega,\eta)R_M^S(\xi,\omega,\eta)\} }
    $$
    for all $\eta\geq 0$, $\omega\in \mathbb{R}^{d-1}$ and $\xi$ with $Re\xi>0$. Here the superscript $*$ means the conjugate transpose of matrices.
\end{definition}
\noindent Let us mention that the original UKC \cite{Kr} is just the GKC with $\eta \equiv 0$. 

For the characteristic case where $A_1$ is not invertible, the matrix $M=M(\xi,\omega,\eta)$ and thereby the GKC are not defined. The main goal of the present paper is to study such a case. Particularly, in Section \ref{sec3} we introduce a characteristic GKC. 

To sum up, our basic assumptions for general linear relaxation systems \eqref{new-2.1} defined in the half-space are the structural stability condition, the Shizuta-Kawashima-like condition, the two assumptions in \eqref{2.2} and \eqref{2.3}, and the characteristic Generalized Kreiss condition (to be defined in Definition \ref{DefGKC}).

\section{Characteristic Generalized Kreiss condition}\label{sec3}

Following \cite{Y2}, we consider the eigenvalue problem corresponding to the linear system \eqref{new-2.1}:
\begin{eqnarray}\label{eq3.1}
    \begin{aligned}
        & \xi \widehat{U} + A_1 \widehat{U}_{x_1} + i\sum_{j=2}^d \omega_jA_j \widehat{U} = \eta Q \widehat{U}, \\[1mm]
        & B\widehat{U}|_{x_1=0} = 0.
    \end{aligned}
\end{eqnarray}
Here $\eta \geq 0$, $\omega=(\omega_2,\omega_3,...,\omega_d)\in\mathbb{R}^{d-1}$, and $\xi$ is a complex number with $Re\xi>0$. If  \eqref{eq3.1} has a bounded solution $\widehat{U}=\widehat{U}(x_1)$ for certain parameters satisfying $\eta>0$ and $Re\xi>0$, then
$$
U^{\epsilon}(x_1,x_2,...,x_d,t) = 
\exp\left(\frac{\xi t}{\eta \epsilon} + \sum_{j=2}^d\frac{i\omega_jx_j}{\eta \epsilon}\right)\widehat{U}\left(\frac{x_1}{\eta \epsilon}\right)
$$
is the solution to the linear system in \eqref{new-2.1} with homogeneous boundary conditions:
\begin{eqnarray*}
    \begin{aligned}
        & U_t + \sum_{j=1}^d A_j U_{x_j} = \frac{1}{\epsilon} Q U, \\[1mm]
        & BU|_{x_1=0} = 0.
    \end{aligned}
\end{eqnarray*}
Since $Re\xi>0$ and $\eta>0$, this solution $U^{\epsilon}$ exponentially increases for $t>0$ as $\epsilon$ goes to zero. In order to have a well-behaved zero relaxation limit, such exponentially increasing solutions should be excluded by proposing proper constraints on the boundary conditions \eqref{Boundary conditions}.

To proceed, we recall that $R_0\in \mathbb{R}^{n\times n_0}$ satisfies $A_1R_0=0$.
Let $R_1\in \mathbb{R}^{n\times (n-n_0)}$ be a matrix such that $(R_1,R_0)$ is invertible. 
Notice that the choice of $R_0$ and $R_1$ is not unique.
Denote the inverse matrix by 
$$
(R_1,R_0)^{-1} = \begin{pmatrix}
    L_1\\ 
    L_0
\end{pmatrix}.
$$ 
Then we have the decomposition
\begin{equation}\label{R1}
I_n = R_1 L_1 + R_0 L_0.
\end{equation} 
Moreover, we need the following fact.
\begin{lemma}[Proposition A.1 in \cite{ZY}]\label{new-lemma3.1}
    Let $D$ be a symmetric positive definite matrix and $E$ be a symmetric matrix. Then the complex matrix $D +iE$ is invertible.
\end{lemma}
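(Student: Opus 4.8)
The plan is to show that the complex $n\times n$ matrix $D+iE$ has trivial kernel, which for a square matrix is equivalent to invertibility. So I would start from an arbitrary $z\in\mathbb{C}^n$ with $(D+iE)z=0$ and aim to conclude $z=0$. Multiplying this identity on the left by the conjugate transpose $z^*$ gives
$z^*Dz+i\,z^*Ez=0$.

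The key observation is that, since $D$ and $E$ are real and symmetric, both quadratic forms $z^*Dz$ and $z^*Ez$ are real numbers (a one-line computation using $D^T=D$, $E^T=E$ and $\bar D=D$, $\bar E=E$, together with the fact that a scalar equals its own transpose). Consequently the displayed identity splits into its real and imaginary parts as $z^*Dz=0$ and $z^*Ez=0$. Because $D$ is symmetric positive definite, $z^*Dz=0$ forces $z=0$. Hence $\ker(D+iE)=\{0\}$ and $D+iE$ is invertible.

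There is essentially no serious obstacle here; the only point requiring (minimal) care is the verification that $z^*Dz$ and $z^*Ez$ are real, which is immediate from the symmetry of $D$ and $E$. Should a determinant computation be preferred, an alternative route is to factor $D+iE=D^{1/2}\bigl(I_n+iD^{-1/2}ED^{-1/2}\bigr)D^{1/2}$ using the symmetric positive definite square root $D^{1/2}$; since $D^{-1/2}ED^{-1/2}$ is real symmetric, its eigenvalues $\lambda_1,\dots,\lambda_n$ are real, so the eigenvalues $1+i\lambda_k$ of the middle factor are all nonzero and $\det(D+iE)=\det(D)\prod_{k}(1+i\lambda_k)\neq0$. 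I would present the kernel argument, as it is shorter and avoids introducing square roots.
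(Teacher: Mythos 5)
Your kernel argument is correct and complete: for $z$ with $(D+iE)z=0$, the reality of $z^*Dz$ and $z^*Ez$ (from the real symmetry of $D$ and $E$) splits $z^*Dz+i\,z^*Ez=0$ into $z^*Dz=0$ and $z^*Ez=0$, and positive definiteness of $D$ then gives $z=0$. The paper itself states this lemma without proof, citing Proposition A.1 of \cite{ZY}; your proof is the standard argument for this fact, so there is nothing to fault or to compare beyond that.
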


With these preparations, we rewrite the linear system of ordinary differential equations (ODEs) in \eqref{eq3.1}.
Notice that $R_0^TA_1=0$. We multiply $R_0^T$ on the left of \eqref{eq3.1} to obtain
$$
R_0^T G(\xi,\omega,\eta) \widehat{U} = 0\quad \text{with} \quad G(\xi,\omega,\eta) = \eta Q - \xi I_n - i\sum_{j=2}^d \omega_jA_j.
$$
Thanks to \eqref{R1}, we have
$$
R_0^T G(\xi,\omega,\eta)R_0 (L_0\widehat{U}) +  R_0^T G(\xi,\omega,\eta)R_1 (L_1\widehat{U}) = 0.
$$
Set
\begin{equation}\label{defM}
G_{kl}(\xi,\omega,\eta) = R_k^TG(\xi,\omega,\eta) R_l,\qquad k,l=0,1.   
\end{equation}
By Lemma \ref{new-lemma3.1}, it is easy to see that $G_{00}(\xi,\omega,\eta)$ is invertible. Then we have
\begin{equation}\label{eq:3.4}
L_0\widehat{U} =  -G_{00}^{-1}(\xi,\omega,\eta)G_{01}(\xi,\omega,\eta) (L_1\widehat{U}).    
\end{equation}
On the other hand, multiplying $R_1^T$ on the left of \eqref{eq3.1} yields
\begin{equation}\label{new3-5}
(R_1^TA_1R_1) (L_1\widehat{U})_{x_1} = G_{11}(\xi,\omega,\eta) (L_1\widehat{U}) + 
G_{10}(\xi,\omega,\eta)(L_0\widehat{U}).
\end{equation}
Denote $\widehat{A}_1=R_1^TA_1R_1$. From the relation \begin{equation}\label{hatA1}
\begin{pmatrix}
       R_1^T\\[2mm]
       R_0^T
    \end{pmatrix}
        A_1 
    \begin{pmatrix}
       R_1 & R_0
    \end{pmatrix}
    =\begin{pmatrix}
       \widehat{A}_1 & 0\\[2mm]
       0 & 0
    \end{pmatrix},    
\end{equation}
we see that $\widehat{A}_1\in\mathbb{R}^{(n-n_0)\times (n-n_0)}$ is an invertible matrix which has the same number of positive and negative eigenvalues as $A_1$. 
Thus we deduce from \eqref{eq:3.4} and \eqref{new3-5} that 
\begin{equation}\label{equationM}
(L_1\widehat{U})_{x_1}=M(\xi,\omega,\eta)(L_1\widehat{U}),
\end{equation}
where
\begin{equation}\label{M}
M(\xi,\omega,\eta) =\widehat{A}_1^{-1}\Big[G_{11}(\xi,\omega,\eta)-G_{10}(\xi,\omega,\eta)G_{00}^{-1}(\xi,\omega,\eta)G_{01}(\xi,\omega,\eta)\Big]    
\end{equation}
is an $(n-n_0)\times (n-n_0)$ matrix. Consequently, the equation in \eqref{eq3.1} has been rewritten as \eqref{eq:3.4} and \eqref{equationM}. Moreover, the initial condition $B\widehat{U}|_{x_1=0}=0$ in \eqref{eq3.1} can be rewritten as
\begin{equation}\label{new-3.9}
B\widehat{U}|_{x_1=0}=BR_1(L_1\widehat{U})|_{x_1=0}=0
\end{equation}
due to the decomposition \eqref{R1} and the assumption \eqref{2.3}.

To solve \eqref{equationM} with the initial condition \eqref{new-3.9}, we need the following important fact about the matrix $M(\xi,\omega,\eta)$ defined in \eqref{M}.
\begin{lemma}\label{lem:3.1}
    For any $\eta \geq 0$ and any $\xi$ with $Re\xi>0$, the matrix $M(\xi, \omega ,\eta)$ has precisely $n_+$ stable eigenvalues and $(n - n_0 - n_+)$ unstable eigenvalues.
\end{lemma}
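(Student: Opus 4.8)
The plan is to count the stable and unstable eigenvalues of $M(\xi,\omega,\eta)$ by a homotopy argument in the spirit of Lemma~2.3 of \cite{Y2}, now with the reduced matrix \eqref{M} in place of \eqref{M-origin}. The parameter set $\mathcal{D}=\{(\xi,\omega,\eta):Re\,\xi>0,\ \omega\in\mathbb{R}^{d-1},\ \eta\ge 0\}$ is convex, hence connected, and $M$ is analytic on it: indeed $\widehat A_1$ is a fixed invertible matrix, and $G_{00}(\xi,\omega,\eta)=R_0^TG R_0$ is invertible throughout $\mathcal D$ by Lemma~\ref{new-lemma3.1}, since, writing $\xi=\sigma+i\rho$ with $\sigma>0$, the matrix $-G_{00}=\big(\sigma R_0^TR_0-\eta R_0^TQR_0\big)+i\big(\rho R_0^TR_0+\sum_{j=2}^d\omega_j R_0^TA_jR_0\big)$ has symmetric positive definite real part (using $R_0^TR_0>0$, $\eta\ge 0$, and $R_0^TQR_0\le 0$ as $Q=\diag(0,S)$ with $S<0$) and symmetric imaginary part. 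Granting that $M$ has no eigenvalue on the imaginary axis anywhere on $\mathcal D$, its $n-n_0$ eigenvalues vary continuously over the connected set $\mathcal D$ without crossing that axis, so the numbers of stable and of unstable eigenvalues are each constant on $\mathcal D$ and sum to $n-n_0$; it then suffices to compute them at one convenient point.

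The crux is to rule out purely imaginary eigenvalues. Suppose $Mw=\mu w$ with $w\neq 0$ and $Re\,\mu=0$. Put $\phi=R_1 w-R_0 G_{00}^{-1}G_{01}w$; then $\phi\neq 0$ since $L_1\phi=w$. Multiplying $(\mu A_1-G)\phi$ successively by $R_1^T$ and $R_0^T$ and using $A_1R_0=0$, the definitions \eqref{defM}--\eqref{M}, and the invertibility of $\bigl(\begin{smallmatrix}R_1^T\\ R_0^T\end{smallmatrix}\bigr)$, one obtains $(\mu A_1-G(\xi,\omega,\eta))\phi=0$, i.e. $\big(\xi I_n+\mu A_1+i\sum_{j=2}^d\omega_j A_j-\eta Q\big)\phi=0$. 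Since $A_1$ and the $A_j$ are symmetric and $Q=\diag(0,S)$ is symmetric negative semidefinite, the Hermitian form $\phi^*(\cdot)\phi$ makes $\phi^*A_1\phi$ and $\phi^*A_j\phi$ real and gives $\phi^*Q\phi\le 0$; taking the real part of the resulting scalar identity and using $Re\,\mu=0$ yields $0=\eta\,\phi^*Q\phi-(Re\,\xi)\|\phi\|^2\le -(Re\,\xi)\|\phi\|^2<0$, a contradiction.

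It remains to compute the counts at the base point $\eta=0$, $\omega=0$, $\xi=\sigma>0$, where $G=-\sigma I_n$, hence $G_{kl}=-\sigma R_k^TR_l$ and $M=-\sigma\widehat A_1^{-1}N$ with $N=R_1^TR_1-R_1^TR_0(R_0^TR_0)^{-1}R_0^TR_1$. The matrix $N$ is the Schur complement of the positive definite block $R_0^TR_0$ in the positive definite Gram matrix $\bigl(\begin{smallmatrix}R_1^T\\ R_0^T\end{smallmatrix}\bigr)(R_1\ R_0)$, so $N$ is symmetric positive definite; therefore $M$ is similar to the symmetric matrix $-\sigma N^{1/2}\widehat A_1^{-1}N^{1/2}$, which is congruent to $-\sigma\widehat A_1^{-1}$. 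By Sylvester's law of inertia and the fact recorded after \eqref{hatA1} that $\widehat A_1$ has $n_+$ positive and $n-n_0-n_+$ negative eigenvalues, $M$ has exactly $n_+$ eigenvalues with negative real part and $n-n_0-n_+$ with positive real part at this point, hence everywhere on $\mathcal D$. The step I expect to be the main obstacle is the second one: producing the correct nonzero vector $\phi$ from an eigenpair of the reduced matrix $M$ and verifying $(\mu A_1-G)\phi=0$; once that translation is secured, the dissipativity estimate exploiting $Re\,\xi>0$ and $Q\le 0$, together with the routine inertia/continuity bookkeeping, finishes the proof.
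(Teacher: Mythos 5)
Your proof is correct, and its skeleton (rule out purely imaginary eigenvalues, use continuity of eigenvalues over the connected parameter set, count at one convenient base point) is the same as the paper's; the difference lies in how the two crucial steps are executed. For the absence of purely imaginary eigenvalues, the paper argues at the determinant level: it uses the block factorization of $\mathrm{diag}(G_{11}-G_{10}G_{00}^{-1}G_{01},\,G_{00})-i\kappa\,\mathrm{diag}(\widehat{A}_1,0)$ to reduce the question to the invertibility of $\eta Q-\xi I-i\sum_j\omega_jA_j-i\kappa A_1$, which it gets by citing Lemma~\ref{new-lemma3.1}. You instead work at the eigenvector level: the lift $\phi=R_1w-R_0G_{00}^{-1}G_{01}w$ does satisfy $R_1^T(\mu A_1-G)\phi=0$ (using $A_1R_0=0$ and $[G_{11}-G_{10}G_{00}^{-1}G_{01}]w=\mu\widehat{A}_1 w$) and $R_0^T(\mu A_1-G)\phi=0$, with $\phi\neq0$ because $L_1\phi=w$, and your Hermitian energy estimate is exactly the proof of Lemma~\ref{new-lemma3.1} inlined; so the two arguments encode the same fact, yours being more self-contained and constructive, the paper's shorter by delegating to the quoted lemma. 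At the base point, the paper takes $\omega=0$, $\eta=0$ and asserts $M(\xi,0,0)=-\xi\widehat{A}_1^{-1}$, which tacitly presumes a normalization in which the Gram--Schur complement $R_1^TR_1-R_1^TR_0(R_0^TR_0)^{-1}R_0^TR_1$ is the identity (harmless, since Remark~\ref{remark3.3} shows the eigenvalue count is independent of the choice of $R_1$); your treatment, keeping the positive definite factor $N$ and passing through the similarity to $-\sigma N^{1/2}\widehat{A}_1^{-1}N^{1/2}$ plus Sylvester's law of inertia, is valid for an arbitrary admissible $R_1$ and is in this respect slightly more careful than the paper's.
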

\begin{proof}
    Firstly, we show that $M(\xi, \omega ,\eta)$ has no purely imaginary eigenvalues. Otherwise, if $M(\xi, \omega ,\eta)$ has a purely imaginary eigenvalue $i\kappa$ with $\kappa \in \mathbb{R}$, then we have
    $$
    \text{det}\{i\kappa I_{n-n_0} - M(\xi, \omega ,\eta)\} = 0.
    $$
    Since $\widehat{A}_1$ is invertible, it follows that
    $$
    \text{det}\left\{G_{11}(\xi,\omega,\eta)-G_{10}(\xi,\omega,\eta)G_{00}^{-1}(\xi,\omega,\eta)G_{01}(\xi,\omega,\eta)-i\kappa \widehat{A}_1 \right\} = 0.
    $$
    From the relation
    \begin{align*}
        &~
        \begin{pmatrix}
            G_{11}-G_{10}G_{00}^{-1}G_{01} & 0\\[2mm]
            0 & G_{00}
        \end{pmatrix} - i\kappa 
       \begin{pmatrix}
       \widehat{A}_1 & 0\\[2mm]
       0 & 0
       \end{pmatrix}
       \\[2mm]
       =&~ 
       \begin{pmatrix}
            I & -G_{10}G_{00}^{-1}\\[2mm]
            0 & I
        \end{pmatrix}
       \left[\begin{pmatrix}
       G_{11} & G_{10}\\[2mm]
       G_{01} & G_{00}
       \end{pmatrix} - i\kappa 
       \begin{pmatrix}
       \widehat{A}_1 & 0\\[2mm]
       0 & 0
       \end{pmatrix}\right]
        \begin{pmatrix}
            I & 0\\[2mm]
            -G_{00}^{-1}G_{01} & I
        \end{pmatrix},
    \end{align*}
    we see that 
    $$
    \text{det}\left\{\begin{pmatrix}
       G_{11} & G_{10}\\[2mm]
       G_{01} & G_{00}
       \end{pmatrix} - i\kappa 
       \begin{pmatrix}
       \widehat{A}_1 & 0\\[2mm]
       0 & 0
       \end{pmatrix}\right\} = 
       \text{det}\left\{G_{11} - G_{10} G_{00}^{-1} G_{01} - i\kappa \widehat{A}_1 \right\}
       \text{det}\{G_{00}\} = 0.
    $$
    By  \eqref{defM} and \eqref{hatA1}, we have
    \begin{align*}
       \begin{pmatrix}
       G_{11} & G_{10}\\[2mm]
       G_{01} & G_{00}
       \end{pmatrix} - i\kappa 
       \begin{pmatrix}
       \widehat{A}_1 & 0\\[2mm]
       0 & 0
       \end{pmatrix}
       =\begin{pmatrix}
       R_1^T\\[2mm]
       R_0^T
       \end{pmatrix}
       \left(\eta Q - \xi I - i\sum_{j=2}^d \omega_jA_j - i\kappa A_1\right)\begin{pmatrix}
       R_1 & R_0
       \end{pmatrix}.
    \end{align*}
    Since $(R_1,R_0)$ is invertible, this relation implies that 
    $$
    \text{det}\left\{\eta Q - \xi I - i\sum_{j=2}^d \omega_jA_j - i\kappa A_1 \right\} = 0.
    $$
    On the other hand, we know from Lemma \ref{new-lemma3.1} that $\eta Q - \xi I - i\sum_{j=2}^d \omega_jA_j - i\kappa A_1$ is invertible, which leads to a contradiction. Thus we conclude that $M(\xi, \omega ,\eta)$ has no purely imaginary eigenvalue.

    Since the eigenvalues of $M(\xi, \omega ,\eta)$ are continuous with respect to the parameters, the numbers of stable and unstable eigenvalues for $M(\xi, \omega ,\eta)$ are invariant. Thus it suffices to show that $M(\xi, 0,0)=-\xi \widehat{A}_1^{-1}$ has $n_+$ stable eigenvalues and $(n-n_0-n_+)$ unstable eigenvalues. Since $Re\xi>0$, the result follows from the relation \eqref{hatA1} and the fact that $A_1$ has $n_+$ positive eigenvalues, $n_0$ zero eigenvalues and $(n-n_0-n_+)$ negative eigenvalues. This completes the proof.
\end{proof}

By this lemma, the right-stable matrix $R_M^S(\xi,\omega,\eta)$ of $M(\xi,\omega,\eta)$ is an $(n-n_0)\times n_+$ matrix. 
Thus, we can easily see from \eqref{eq:3.4}, \eqref{equationM} and \eqref{new-3.9} that 
the linear system of ODEs \eqref{eq3.1} has no bounded solutions if 
$$
\text{det}\{BR_1R_M^S(\xi,\omega,\eta) \}\neq 0.
$$
Following the ideas in \cite{Kr,MO,Y2}, we introduce
\begin{definition}[\textbf{Characteristic Generalized Kreiss condition}]\label{DefGKC}
    There exists a constant $c_K>0$ such that 
    $$
    |\text{det}\{BR_1R_M^S(\xi,\omega,\eta) \}|\geq c_K\sqrt{\text{det}\{R_{M}^{S*}(\xi,\omega,\eta)R_M^S(\xi,\omega,\eta)\} }
    $$
    for all $\eta\geq 0$, $\omega\in \mathbb{R}^{d-1}$ and $\xi$ with $Re\xi>0$.
\end{definition}

\begin{remark}\label{remark3.1}
When $\eta=0$, this characteristic GKC is just the Uniform Kreiss Condition (UKC) for characteristic IBVPs \cite{MO}.
On the other hand, when $n_0=0$, the matrix $R_0$ is void and $R_1$ can be the identity matrix $I_n$. In this case, we have $\widehat{A}_1=A_1$ and 
$$
    M(\xi,\omega,\eta) = A_1^{-1}\left(\eta Q - \xi I_n - i \sum_{j=2}^d \omega_j A_j\right),  
$$
which is just the matrix in \eqref{M-origin}. Thus this characteristic GKC is an extension of the original GKC in Definition \ref{GKC-origin}.
\end{remark}

\begin{remark}\label{remark3.3}
    Notice that the choice of $R_0$ and $R_1$ is not unique. However, the characteristic GKC does not rely on the special choice. Indeed, if we have another $(\widetilde{R}_0,\widetilde{R}_1)$ such that $A_1\widetilde{R}_0=0$ and $(\widetilde{R}_1,\widetilde{R}_0)$ is invertible, then it can be expressed in terms of $(R_1,R_0)$ as 
    \begin{equation}\label{new-R0R1}
\widetilde{R}_0 = R_0D_0,\quad \widetilde{R}_1=R_1C_1+R_0C_0,        
    \end{equation}
    where
    $D_0\in\mathbb{R}^{n_0\times n_0}$ and 
    $C_1\in\mathbb{R}^{(n-n_0)\times (n-n_0)}$ are invertible, and $C_0\in\mathbb{R}^{n_0\times (n-n_0)}$. Since $A_1R_0=0$ and $A_1$ is symmetric, we see from \eqref{new-R0R1} that
    $$
    \widetilde{R}_1^TA_1\widetilde{R}_1 = C_1^T(R_1^TA_1R_1)C_1.
    $$
    Moreover, it follows from \eqref{new-R0R1} and the definition in \eqref{defM} that
    \begin{align*}
    \widetilde{G}_{11} - \widetilde{G}_{10}\widetilde{G}_{00}^{-1}\widetilde{G}_{01} 
    =&~ \widetilde{R}_1^T \left(G - G\widetilde{R}_0 (\widetilde{R}_0^TG\widetilde{R}_0)^{-1}\widetilde{R}_0^TG \right)
    \widetilde{R}_1\\[2mm]
    =&~ (R_1C_1+R_0C_0)^T \left(G - GR_0 (R_0^TGR_0)^{-1}R_0^TG \right)
    (R_1C_1+R_0C_0).   
    \end{align*}
    Notice that $\left(G - GR_0 (R_0^TGR_0)^{-1}R_0^TG \right)R_0=0$. 
    We have
    \begin{align*}
    \widetilde{G}_{11} - \widetilde{G}_{10}\widetilde{G}_{00}^{-1}\widetilde{G}_{01} 
    =&~ C_1^T R_1^T\left(G - GR_0 (R_0^TGR_0)^{-1}R_0^TG \right)
    R_1C_1 = C_1^T(G_{11} - G_{10}G_{00}^{-1}G_{01})C_1   
    \end{align*}
    and thereby
    $$
    \widetilde{M}(\xi,\omega,\eta) = (\widetilde{R}_1^TA_1\widetilde{R}_1)^{-1} (\widetilde{G}_{11} - \widetilde{G}_{10}\widetilde{G}_{00}^{-1}\widetilde{G}_{01}) = C_1^{-1}M(\xi,\omega,\eta)C_1.
    $$
    Thus the right-stable matrix for $\widetilde{M}(\xi,\omega,\eta)$ is $\widetilde{R}_{M}^S(\xi,\omega,\eta)=C_1^{-1}R_{M}^S(\xi,\omega,\eta)$. 
    Since $BR_0=0$, we deduce from \eqref{new-R0R1} that
    $$
    |\text{det}\{B\widetilde{R}_1\widetilde{R}_{M}^S(\xi,\omega,\eta) \}|
    =|\text{det}\{BR_1R_{M}^S(\xi,\omega,\eta) \}|.
    $$
    On the other hand, by using Lemma 3.3 in \cite{Y2} we can see 
    \begin{align*}
    \sqrt{\text{det}\{R_{M}^{S*}(\xi,\omega,\eta)R_M^S(\xi,\omega,\eta)\} } 
    =&~ \sqrt{\text{det}\{\widetilde{R}_{M}^{S*}(\xi,\omega,\eta)C_1^*C_1\widetilde{R}_{M}^S(\xi,\omega,\eta)\} } \\[2mm]
    \geq &~ c_0 \sqrt{\text{det}\{\widetilde{R}_{M}^{S*}(\xi,\omega,\eta)\widetilde{R}_{M}^S(\xi,\omega,\eta)\} }
    \end{align*}
    with a constant $c_0>0$. In conclusion, the characteristic GKC holds with $(\widetilde{R}_1,\widetilde{R}_0)$ if so does it with $(R_1,R_0)$. This means that the characteristic GKC is an intrinsic property of the system \eqref{new-2.1} together with the boundary condition \eqref{Boundary conditions}.
\end{remark}

\section{Reduced boundary conditions}\label{section4}

In this section, we derive the reduced boundary condition for the relaxation limit. 
Unlike those in our previous works \cite{Y2,ZY,ZY3}, both the 
coefficient matrices $A_1$ and $A_{11}$ may have zero eigenvalues. Denote
\begin{align*}
&n_{10} = \text{the multiplicity of the zero eigenvalue of}~ A_{11},\\[2mm]
&n_{1+} = \text{the number of positive eigenvalues of}~ A_{11}.
\end{align*}
Recall Section \ref{section2.3} that $n_0$ and $n_+$ represent the numbers of zero and positive eigenvalues for $A_1$, respectively. 
This section has four subsections and we start with 

\subsection{Asymptotic expansions }\label{Section4.1}
In this subsection, we review the asymptotic expansions in \cite{ZY3}.
Set
$$
y=\frac{x_1}{\epsilon}, \quad z=\frac{x_1}{\sqrt{\epsilon}}
$$ 
and consider an approximate solution of the form
\begin{align}
U_\epsilon(x_1,\hat{x},t)=&
\begin{pmatrix}
  \bar{u} \\[2mm]
  \bar{v}
\end{pmatrix}(x_1,\hat{x},t)+
\begin{pmatrix}
  \mu_0 \\[2mm]
  \nu_0
\end{pmatrix}(y,\hat{x},t)+
\begin{pmatrix}
  \mu_1 \\[2mm]
  \nu_1
\end{pmatrix}(z,\hat{x},t)
+\sqrt{\epsilon}
\begin{pmatrix}
  \mu_2 \\[2mm]
  \nu_2
\end{pmatrix}(z,\hat{x},t).\label{3.1}
\end{align}
Here the partition is the same as that in \eqref{partA}, the first term
$(\bar{u},\bar{v})$ is the outer solution, and the other terms are boundary-layer corrections satisfying 
\begin{equation}\label{correction:inf}
    (\mu_0,\nu_0)(\infty,\hat{x},t)=(\mu_1,\nu_1)(\infty,\hat{x},t)=(\mu_2,\nu_2)(\infty,\hat{x},t)=0.
\end{equation}
It is not difficult to see that the outer solution solves the equilibrium system 
\begin{gather}
\bar{u}_{t}+A_{11}\bar{u}_{x_1}+\sum_{j=2}^dA_{j11}\bar{u}_{x_j}=0,\label{3.2}\\[2mm]
\bar{v}=0.\label{3.3}
\end{gather}

To derive equations for the boundary-layer correction terms, we denote 
by $\mathcal{L}$ the differential operator
\begin{equation*}
	\mathcal{L}(U):=\partial_tU+\sum_{j=1}^d A_j\partial_{x_j}U-QU/\epsilon
\end{equation*}
and write
\begin{equation}\label{Residual}
\mathcal{L}(U_\epsilon)=\mathcal{L}(U_{outer})+\mathcal{L}(U_{layer}),  
\end{equation}
where
$$
U_{outer} = 
\begin{pmatrix}
    \bar{u}\\[1mm]
    \bar{v}
\end{pmatrix},\qquad 
U_{layer} = 
\begin{pmatrix}
    \mu_0+\mu_1+\sqrt{\epsilon}\mu_2\\[1mm]
    \nu_0+\nu_1+\sqrt{\epsilon}\nu_2
\end{pmatrix}.
$$
By \eqref{3.2} and \eqref{3.3}, we have
\begin{align}
\mathcal{L}(U_{outer})=&~\partial_{t}\begin{pmatrix}
    \bar{u}\\[2mm]
    \bar{v}
\end{pmatrix} +
\sum\limits_{j=1}^d
\begin{pmatrix}
   A_{j11} & A_{j12}\\[2mm]
   A_{j12}^T & A_{j22}
\end{pmatrix}
\partial_{x_j}\begin{pmatrix}
    \bar{u}\\[2mm]
    \bar{v}
\end{pmatrix} 
  -\frac{1}{\epsilon}
\begin{pmatrix}
   0 & 0\\[2mm]
   0 & S
\end{pmatrix}
\begin{pmatrix}
    \bar{u}\\[2mm]
    \bar{v}
\end{pmatrix} \nonumber \\[2mm]
=&~
\begin{pmatrix}
    0 \\
  \sum\limits_{j=1}^d A_{j12}^T\partial_{x_j}\bar{u}
\end{pmatrix}. \label{residual-1}
\end{align}
On the other hand, we compute:
\begin{align}
\mathcal{L}(U_{layer})=&~\partial_t
\begin{pmatrix}
    \mu_0+\mu_1+\sqrt{\epsilon}\mu_2\\[2mm]
    \nu_0+\nu_1+\sqrt{\epsilon}\nu_2
\end{pmatrix}+\sum\limits_{j=2}^d \begin{pmatrix}
   A_{j11} & A_{j12}\\[2mm]
   A_{j12}^T & A_{j22}
\end{pmatrix}
\partial_{x_j}
\begin{pmatrix}
    \mu_0+\mu_1+\sqrt{\epsilon}\mu_2\\[2mm]
    \nu_0+\nu_1+\sqrt{\epsilon}\nu_2
\end{pmatrix} \nonumber \\[2mm]
& + \begin{pmatrix}
   A_{11} & A_{12}\\[2mm]
   A_{12}^T & A_{22}
\end{pmatrix}\bigg[\frac{1}{\epsilon}
~\partial_y\begin{pmatrix}
    \mu_0\\[2mm]
    \nu_0
\end{pmatrix} +\frac{1}{\sqrt{\epsilon}}
~\partial_z\begin{pmatrix}
    \mu_1\\[2mm]
    \nu_1
\end{pmatrix} + 
\partial_z\begin{pmatrix}
    \mu_2\\[2mm]
    \nu_2
\end{pmatrix} \bigg] \nonumber \\[2mm]
&-\frac{1}{\epsilon}
\begin{pmatrix}
   0 & 0\\[2mm]
   0 & S
\end{pmatrix}\begin{pmatrix}
    \mu_0+\mu_1+\sqrt{\epsilon}\mu_2\\[2mm]
    \nu_0+\nu_1+\sqrt{\epsilon}\nu_2
\end{pmatrix}. \label{residual-2}
\end{align}

The asymptotic solution $U_\epsilon$ should make $\mathcal{L}(U_\epsilon)$
as small as possible. Thus we 
let the coefficients of $\epsilon^{-1}, \epsilon^{-1/2}$, $\epsilon^{0}$ in \eqref{residual-2} be zero and obtain
\begin{eqnarray}\label{eq:residualbl}
\begin{aligned}
O\left(\epsilon^{-1}\right):&\quad \begin{pmatrix}
A_{11} & A_{12} \\[2mm]
A_{12}^T & A_{22}
\end{pmatrix}
\partial_y\begin{pmatrix}
\mu_0\\[2mm]
\nu_0
\end{pmatrix}
=\begin{pmatrix}
0 & 0 \\[2mm]
0 & S
\end{pmatrix}
\begin{pmatrix}
\mu_0+\mu_1\\[2mm]
\nu_0+\nu_1
\end{pmatrix}, \\[3mm]
O\left(\epsilon^{-1/2}\right):&\quad
\begin{pmatrix}
A_{11} & A_{12} \\[2mm]
A_{12}^T & A_{22}
\end{pmatrix}
\partial_z\begin{pmatrix}
\mu_1\\[2mm]
\nu_1
\end{pmatrix}
=\begin{pmatrix}
0 & 0 \\[2mm]
0 & S
\end{pmatrix}
\begin{pmatrix}
\mu_2\\[2mm]
\nu_2
\end{pmatrix}, \\[2mm]
O\left(\epsilon^{0}\right):&\quad 
~\partial_t\mu_{1}+A_{11}\partial_z\mu_{2}+A_{12}\partial_z\nu_{2} +\sum\limits_{j=2}^d\big[A_{j11}\partial_{x_j} \mu_1 +A_{j12}\partial_{x_j} \nu_1\big]=0.
\end{aligned}
\end{eqnarray}
Here, for the coefficient of $\epsilon^0$, only the first $(n-r)$ components and the z-dependent terms are considered. 
In the first equation of \eqref{eq:residualbl}, the unknowns $(\mu_0,\nu_0)$ and $(\mu_1,\nu_1)$ should be considered separately since $y$ and $z$ are independent variables, which gives
\begin{align}
&~~\nu_1=0, \label{eq:blcorrection11}\\[2mm]
&\begin{pmatrix}
A_{11} & A_{12} \\[2mm]
A_{12}^T & A_{22}
\end{pmatrix}
\partial_y
\begin{pmatrix}
\mu_0\\[2mm]
\nu_0
\end{pmatrix}=\begin{pmatrix}
0 & 0\\[2mm]
0 & S
\end{pmatrix}
\begin{pmatrix}
\mu_0\\[2mm]
\nu_0
\end{pmatrix}\label{eq:blcorrection12}.
\end{align}
By \eqref{eq:blcorrection11}, the second equation in \eqref{eq:residualbl} becomes
\begin{align}
&~A_{11}\partial_z\mu_1 = 0, \label{eq:blcorrection13}\\[2mm] 
&~A_{12}^T\partial_z\mu_1=S\nu_2. \label{eq:blcorrection14}
\end{align}
If the matrix $A_{11}$ is invertible, we obtain $\mu_1=0$ from \eqref{eq:blcorrection13} and \eqref{correction:inf}. This corresponds to the non-characteristic case in \cite{Y2} and there is no boundary-layer of scale $O(\sqrt{\epsilon})$.

When $A_{11}$ is not invertible, we recall that $A_{11}$ is symmetric
and introduce an orthonormal matrix $(P_1,P_0)$ satisfying
\begin{equation}\label{A11-decomposition}
    \begin{pmatrix}
        P_1^T\\[2mm]
        P_0^T
    \end{pmatrix}
    \begin{pmatrix}
        P_1 & 
        P_0
    \end{pmatrix} = I_{n-r},\qquad 
    \begin{pmatrix}
        P_1^T\\[2mm]
        P_0^T
    \end{pmatrix}
    A_{11}
    \begin{pmatrix}
        P_1 & 
        P_0
    \end{pmatrix}
    =\begin{pmatrix}
        \Lambda_1 & 0\\[2mm]
        0 & 0
    \end{pmatrix}.
\end{equation}
Here $\Lambda_1$ is an $(n-r-n_{10}) \times (n-r-n_{10})$ invertible diagonal matrix, $P_1$ and $P_0$ are $(n-r)\times (n-r-n_{10})$ and $(n-r)\times n_{10}$ matrices respectively.
Then we multiply $P_1^T$ on the left of \eqref{eq:blcorrection13} and use \eqref{correction:inf} to obtain \begin{equation}\label{P1mu}
    P_1^T\mu_1=0,
\end{equation} 
which means 
$$
\mu_1 = (P_1P_1^T+P_0P_0^T)\mu_1 = P_0(P_0^T\mu_1).
$$ 
By using \eqref{eq:blcorrection11} and \eqref{eq:blcorrection14}, we multiply $P_0^T$ on the last equation of \eqref{eq:residualbl} to obtain
\begin{equation}\label{P0mu}
\partial_t(P_0^T\mu_1)+\left[P_0^TA_{12}S^{-1}A_{12}^TP_0\right]\partial_{zz}(P_0^T\mu_1)+\sum\limits_{j=2}^d (P_0^TA_{j11}P_0) \partial_{x_j} (P_0^T\mu_1)=0.
\end{equation}
On the other hand, multiplying $P_1^T$ on the last equation of \eqref{eq:residualbl} yields
\begin{equation*}
P_1^TA_{11}\partial_z \mu_2 + P_1^TA_{12}\partial_z \nu_2+P_1^T\sum\limits_{j=2}^d A_{j11}\partial_{x_j} \mu_1 =0.
\end{equation*}
Moreover, by \eqref{A11-decomposition} and \eqref{eq:blcorrection14} we get
\begin{equation}\label{eq:blcorrection16}
P_1^T\mu_2 = -\Lambda_1^{-1}\left[ P_1^TA_{12}S^{-1}A_{12}^T\partial_z \mu_1 - \sum\limits_{j=2}^d P_1^TA_{j11}\int_z^{\infty}\partial_{x_j} \mu_1\right].
\end{equation}
Note that it is not necessary for our purpose to determine $\mu_2$ itself.

In summary, we have
\begin{eqnarray*}
\left\{{\begin{array}{*{20}l}
  \text{\eqref{3.3}}~\Rightarrow~\bar{v}=0 ,\\[3mm]
  \text{\eqref{eq:blcorrection11}}~\Rightarrow~\nu_1=0,\\[3mm]
  \text{\eqref{P1mu}}~\Rightarrow ~P_1^T\mu_1=0,
\end{array}}\right.\qquad\left\{{\begin{array}{*{20}l}
  \text{\eqref{3.2}}~\Rightarrow~\bar{u},\\[3mm]
  \text{\eqref{eq:blcorrection12}}~\Rightarrow~(\mu_0,\nu_0),\\[3mm]
  \text{\eqref{P0mu}}~\Rightarrow~P_0^T\mu_1,
\end{array}}\right.\qquad
\left\{{\begin{array}{*{20}l}
  \text{\eqref{eq:blcorrection16}}\Rightarrow~P_1^T\mu_2 ,\\[3mm]
  \text{\eqref{eq:blcorrection14}}\Rightarrow~\nu_2.
\end{array}}\right.
\end{eqnarray*}
Note that the second set of equations \eqref{3.2}, \eqref{eq:blcorrection12} and \eqref{P0mu} are differential equations while others are algebraic relations. In particular, \eqref{3.2} is the equilibrium system governing the relaxation limit. By our basic assumptions, the equilibrium system is symmetrizable hyperbolic and the coefficient matrix $A_{11}$ has $n_{1+}$ positive eigenvalues. From the classical theory of hyperbolic PDEs \cite{GKO,BS}, $n_{1+}$ boundary conditions should be given to solve the equilibrium system \eqref{3.2}.

\subsection{Boundary-layer equations}

In order to derive the $n_{1+}$ boundary conditions for the equilibrium system, we turn to the boundary-layer equations \eqref{eq:blcorrection12} and \eqref{P0mu}.
Note that the coefficient matrix $A_1$ in \eqref{eq:blcorrection12} may not be invertible which differs from our previous work \cite{ZY3}.

Observe that the boundary-layer equation \eqref{eq:blcorrection12} 
for $(\mu_0,\nu_0)$ is just 
the equation in \eqref{eq3.1} with $\xi=0$, $\omega=0$, and $\eta=1$. 
Thus we use the results in Section \ref{sec3} to obtain 
\begin{equation}\label{relation0-1}
L_0\begin{pmatrix}
    \mu_0\\[1mm]
    \nu_0
\end{pmatrix}=-G_{00}^{-1}(0,0,1)G_{01}(0,0,1)L_1\begin{pmatrix}
    \mu_0\\[1mm]
    \nu_0
\end{pmatrix}
\end{equation}
and 
\begin{equation}\label{M001}
L_1\begin{pmatrix}
    \mu_0\\[1mm]
    \nu_0
\end{pmatrix}_y = M(0,0,1)L_1\begin{pmatrix}
    \mu_0\\[1mm]
    \nu_0
\end{pmatrix},
\end{equation}
which correspond to those in \eqref{eq:3.4} and \eqref{equationM}.
Notice that
$$
-G_{00}^{-1}(0,0,1)G_{01}(0,0,1)=
-(R_0^TQR_0)^{-1} R_0^TQR_1
$$ 
and 
$M(0,0,1)= \widehat{A}_1^{-1}\widehat{Q}$ with
\begin{equation}\label{hatA-hatQ}
    \widehat{A}_1 = R_1^TA_1R_1,\qquad \widehat{Q} = R_1^TQR_1 - R_1^TQR_0 (R_0^TQR_0)^{-1} R_0^TQR_1.
\end{equation}
As in Section \ref{sec3}, once the ODE system \eqref{M001} is solved, the relation \eqref{relation0-1} can be used to determine 
$L_0\begin{pmatrix}
    \mu_0\\
    \nu_0
\end{pmatrix}$. Consequently, we can obtain the correction term
$(\mu_0,\nu_0)$ as
\begin{equation}\label{mu0nu0-relation}
\begin{pmatrix}
    \mu_0\\[1mm]
    \nu_0
\end{pmatrix}=R_0L_0\begin{pmatrix}
    \mu_0\\[1mm]
    \nu_0
\end{pmatrix}+R_1L_1\begin{pmatrix}
    \mu_0\\[1mm]
    \nu_0
\end{pmatrix}.
\end{equation}

To solve \eqref{M001}, we analyze the matrices in \eqref{hatA-hatQ}. Firstly, we select an elaborate matrix $R_1$ as follows. Recall Remark \ref{remark3.3} that the choice of $R_1$ is not unique.

\begin{prop}\label{prop4.1}
Partition the matrix $R_0$ as
$$
R_0=
\begin{pmatrix}
R_{01}\\[2mm]
R_{02}
\end{pmatrix},\quad 
R_{01}\in \mathbb{R}^{(n-r)\times n_0},\quad 
R_{02}\in \mathbb{R}^{r\times n_0}.
$$ 
Under the Shizuta-Kawashima-like condition, it follows that $r\geq n_0$ and the matrix $R_{02}$ is of full-rank. Let $R_{02}^{\perp}\in \mathbb{R}^{r\times (r-n_0)}$ be a matrix such that $(R_{02},R_{02}^{\perp})$ is invertible. Then $(R_0,R_1)$ is invertible with
\begin{equation}\label{R0R1}
R_1 = 
\begin{pmatrix}
I_{n-r} & 0\\[2mm]
0 & R_{02}^{\perp}
\end{pmatrix}.
\end{equation}
\end{prop}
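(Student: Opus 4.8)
The plan is to prove the two assertions of the proposition in turn: first that $r\ge n_0$ and $R_{02}$ has full (column) rank, and then that the explicit $R_1$ in \eqref{R0R1} completes $R_0$ to an invertible $n\times n$ matrix. Both steps are elementary linear algebra once one recalls the normalized structure $Q=\text{diag}(0,S)$ with $S$ symmetric negative definite, so that $\ker(Q)=\mathbb{R}^{n-r}\times\{0\}$, and that the columns of $R_0$ are linearly independent and span $\ker(A_1)$ (recall $A_1$ is symmetric, hence the geometric and algebraic multiplicities of its zero eigenvalue coincide, and $(R_1,R_0)$ being invertible already forces $R_0$ to have full column rank).

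For the first assertion I would argue by contradiction. If $R_{02}$ were not of full column rank, there would be a nonzero $y\in\mathbb{R}^{n_0}$ with $R_{02}y=0$; then $x:=R_0y=(R_{01}y,\,0)^T$ is nonzero (columns of $R_0$ are independent), lies in $\ker(A_1)$ (each column of $R_0$ does), and lies in $\ker(Q)=\mathbb{R}^{n-r}\times\{0\}$ since its last $r$ entries vanish. This contradicts the Shizuta--Kawashima-like condition \eqref{kawa}. Hence $R_{02}$ has full column rank $n_0$, which in particular forces $n_0\le r$. Equivalently, one may invoke Proposition \ref{new-prop2.1}: $R_0^TQR_0=R_{02}^TSR_{02}$ is invertible and $S<0$, so $y\mapsto R_{02}y$ is injective.

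For the second assertion, choose $R_{02}^{\perp}\in\mathbb{R}^{r\times(r-n_0)}$ so that the $r\times r$ matrix $(R_{02},R_{02}^{\perp})$ is invertible (possible because $R_{02}$ has $n_0\le r$ independent columns). Note that $R_1$ in \eqref{R0R1} has size $n\times(n-n_0)$, since $(n-r)+(r-n_0)=n-n_0$, so $(R_0,R_1)$ is $n\times n$. I would then show its kernel is trivial: writing
\[
(R_0,R_1)=\begin{pmatrix} R_{01} & I_{n-r} & 0 \\[2mm] R_{02} & 0 & R_{02}^{\perp}\end{pmatrix},
\]
if a vector $(a,b,c)^T$ with $a\in\mathbb{R}^{n_0}$, $b\in\mathbb{R}^{n-r}$, $c\in\mathbb{R}^{r-n_0}$ is annihilated, the lower block gives $(R_{02},R_{02}^{\perp})(a,c)^T=0$, hence $a=0$ and $c=0$ by invertibility of $(R_{02},R_{02}^{\perp})$; the upper block then gives $b=-R_{01}a=0$. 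Thus $(R_0,R_1)$ is invertible.

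I do not anticipate a genuine obstacle: the content is structural rather than computational. The only point requiring care is invoking \eqref{kawa} in the right direction — it is used precisely to guarantee that no nonzero zero-eigenvector of $A_1$ is supported entirely in the first $n-r$ coordinates, which is exactly the injectivity of $y\mapsto R_{02}y$ needed in both steps.
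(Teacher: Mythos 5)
Your proposal is correct and follows essentially the same route as the paper: both reduce the full-rank claim for $R_{02}$ to the invertibility of $R_0^TQR_0=R_{02}^TSR_{02}$ (your direct contradiction via \eqref{kawa} is just the argument behind Proposition \ref{new-prop2.1}, which you also cite), and conclude $r\ge n_0$ from $S<0$. Your explicit kernel computation for $(R_0,R_1)$ merely spells out what the paper dismisses as obvious, so there is nothing to add or fix.
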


\begin{proof}
With the above partition of $R_0$, we compute
\begin{equation*}
R_0^TQR_0 = 
\begin{pmatrix}
R_{01}^T &
R_{02}^T
\end{pmatrix}
\begin{pmatrix}
0 & 0\\[2mm]
0 & S
\end{pmatrix}
\begin{pmatrix}
R_{01}\\[2mm]
R_{02}
\end{pmatrix} = R_{02}^TSR_{02}.
\end{equation*}
Thanks to Proposition \ref{new-prop2.1}, $R_{02}^TSR_{02}$ is invertible. Since $S$ is symmetric negative definite, it follows that $r\geq n_0$ and $R_{02}$ is of full-rank. 
The invertibility of 
$$
(R_1,R_0) = \begin{pmatrix}
I_{n-r} & 0 & R_{01}\\[2mm]
0 & R_{02}^{\perp} & R_{02}
\end{pmatrix}
$$
is obvious. 
This completes the proof. 
\end{proof}

\begin{remark}
In what follows, we focus on the case $r>n_0$ unless otherwise specified. For $r=n_0$, the matrix $R_{02}^{\perp}$ is void and similar conclusions can be obtained easily.
\end{remark}

Having $R_1$ selected above, we compute $\widehat{A}_1$ and $\widehat{Q}$ defined in \eqref{hatA-hatQ}:
\begin{align*}
    \widehat{A}_1 =R_1^TA_1R_1=\begin{pmatrix}
I_{n-r} & 0\\[2mm]
0 & (R_{02}^{\perp})^T
\end{pmatrix}\begin{pmatrix}
        A_{11} & A_{12} \\[2mm]
        A_{12}^T & A_{22} 
    \end{pmatrix}\begin{pmatrix}
I_{n-r} & 0\\[2mm]
0 & R_{02}^{\perp}
\end{pmatrix}=\begin{pmatrix}
        A_{11} & \widehat{A}_{12} \\[2mm]
        \widehat{A}_{12}^T & \widehat{A}_{22} 
    \end{pmatrix}
\end{align*}
with
\begin{equation}\label{hatA12A22}
    \widehat{A}_{12} = A_{12}R_{02}^{\perp},\qquad 
    \widehat{A}_{22} = (R_{02}^{\perp})^TA_{22}R_{02}^{\perp}
\end{equation}
and
\begin{align}
\widehat{Q}=&~
R_1^T[Q - QR_0(R_0^TQR_0)^{-1}R_0^TQ]R_1 \nonumber \\[2mm]
= &~
R_1^T\left[\begin{pmatrix}
0 & 0 \\[2mm]
0 & S
\end{pmatrix}
-
\begin{pmatrix}
0 \\[2mm]
SR_{02}
\end{pmatrix}\left(R_{02}^TSR_{02}\right)^{-1}
\begin{pmatrix}
0 & R_{02}^TS
\end{pmatrix}\right]R_1 \nonumber \\[2mm]
=&\begin{pmatrix}
I_{n-r} & 0\\[2mm]
0 & (R_{02}^{\perp})^T
\end{pmatrix}\begin{pmatrix}
0 & 0 \\[2mm]
0 & S - SR_{02}\left(R_{02}^TSR_{02}\right)^{-1}R_{02}^TS
\end{pmatrix}\begin{pmatrix}
I_{n-r} & 0\\[2mm]
0 & R_{02}^{\perp}
\end{pmatrix} \nonumber \\[2mm]
=&
\begin{pmatrix}
0 & 0\\[2mm]
0 & \widehat{S}
\end{pmatrix} \label{new:hatQ}
\end{align}
with
\begin{equation}\label{eq24}
    \widehat{S} = (R_{02}^{\perp})^T\Big(S-SR_{02}(R_{02}^TSR_{02})^{-1}R_{02}^TS\Big)R_{02}^{\perp}.
\end{equation}
Since $(R_{02},R_{02}^{\perp})$ is invertible and $S$ is symmetric negative definite, it is not difficult to show that $\widehat{S}$ is also symmetric negative definite for $r>n_0$.

Having the block-diagonal form of $\widehat{Q}$ in \eqref{new:hatQ}, the solution of the boundary-layer equation \eqref{M001} can be attributed to the case studied in \cite{ZY3}.  
\begin{prop}[Propositions 3.2 and 3.3 in \cite{ZY3}]\label{prop2.1}
Under the basic assumptions in Section \ref{Section2.1}, 
we have the following conclusions with notations given in \eqref{A11-decomposition}, \eqref{hatA12A22}, and \eqref{eq24}.
\begin{itemize}
    \item[(i)] $r-n_0\geq n_{10}$ and the matrix $$
    K:=\widehat{A}_{12}^TP_0  \in\mathbb{R}^{(r-n_0)\times n_{10}}$$ is of full-rank.
    \item[(ii)] The solution to  \eqref{M001} can be expressed as 
\begin{equation}\label{eq:boundary-layer-correction}
L_1\begin{pmatrix}
    \mu_0\\[1.5mm]
    \nu_0
\end{pmatrix}=\begin{pmatrix}
N\\[1mm]
\widetilde{K}
\end{pmatrix}w,
\end{equation}
where $w\in\mathbb{R}^{r-n_0-n_{10}}$ satisfies the ordinary differential equation 
\begin{align}\label{ODEw}
\partial_y w = \big(\widetilde{K}^TX\widetilde{K}\big)^{-1}\big(\widetilde{K}^T \widehat{S}\widetilde{K}\big) w,\qquad y\geq 0.
\end{align}
Here $\widetilde{K}\in \mathbb{R}^{(r-n_0) \times (r-n_0-n_{10})}$ is the orthogonal complement of $K$, $X\in\mathbb{R}^{(r-n_0)\times (r-n_0)}$ and $N\in \mathbb{R}^{(n-r)\times (r-n_0-n_{10})}$ are defined by
\begin{align*}
&X= \widehat{A}_{22}-\widehat{A}_{12}^T P_1\Lambda_{1}^{-1}P_1^T
	             \widehat{A}_{12} \\[2mm]
&N =
    -P_1\Lambda_1^{-1}P_{1}^{T}\widehat{A}_{12}\widetilde{K} + P_0(K^TK)^{-1}\Big(\big(K^T\widehat{S}\widetilde{K}\big)\big(\widetilde{K}^T\widehat{S}\widetilde{K}\big)^{-1}\big(\widetilde{K}^TX\widetilde{K}\big)-K^TX\widetilde{K}\Big).
\end{align*}
Moreover, the coefficient matrix $\big(\widetilde{K}^TX\widetilde{K}\big)^{-1}\big(\widetilde{K}^T \widehat{S} \widetilde{K}\big)$ in \eqref{ODEw} is invertible with $(n_+-n_{1+}-n_{10})$ stable eigenvalues.
\end{itemize}

\end{prop}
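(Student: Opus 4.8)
The plan is to recognize that, after the elaborate choice of $R_1$ made in Proposition \ref{prop4.1}, the boundary-layer equation \eqref{M001} is \emph{exactly} of the type analysed in \cite{ZY3}, so that Propositions 3.2 and 3.3 there apply essentially verbatim after the substitution
$$(n,\ r,\ A_1,\ Q,\ A_{12},\ A_{22},\ S)\ \rightsquigarrow\ (n-n_0,\ r-n_0,\ \widehat A_1,\ \widehat Q,\ \widehat A_{12},\ \widehat A_{22},\ \widehat S).$$
First I would record the structural properties of the hatted coefficients. By \eqref{hatA1} and the symmetry of $A_1$, the matrix $\widehat A_1=R_1^TA_1R_1$ is symmetric and \emph{invertible}, with $n_+$ positive and $(n-n_0-n_+)$ negative eigenvalues. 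By the choice \eqref{R0R1}, the leading $(n-r)\times(n-r)$ block of $\widehat A_1$ is precisely the original $A_{11}$, which has $n_{1+}$ positive, $n_{10}$ zero and $(n-r-n_{10}-n_{1+})$ negative eigenvalues. Finally, \eqref{new:hatQ} gives $\widehat Q=\mathrm{diag}(0,\widehat S)$ with $\widehat S\in\mathbb{R}^{(r-n_0)\times(r-n_0)}$ symmetric negative definite (for $r>n_0$). Since $M(0,0,1)=\widehat A_1^{-1}\widehat Q$, equation \eqref{M001} reads $(L_1(\mu_0,\nu_0))_y=\widehat A_1^{-1}\widehat Q\,L_1(\mu_0,\nu_0)$: it is the half-space boundary-layer ODE of a relaxation system whose leading matrix $\widehat A_1$ is invertible symmetric, whose relaxation term $\widehat Q$ is block-diagonal with a negative definite block, and whose leading block $A_{11}$ may be singular --- precisely the configuration of \cite{ZY3}.

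Next I would check that the hatted system satisfies the hypotheses used in \cite{ZY3}: the symmetrizer may be taken to be $I_{n-n_0}$ since $\widehat A_1$ is symmetric; the Onsager-type relation holds since $\widehat S=\widehat S^T$; the coupling inequality holds since $\widehat Q+\widehat Q^T=2\,\mathrm{diag}(0,\widehat S)<0$; and the Shizuta--Kawashima-like condition is automatic since $\ker(\widehat A_1)=\{0\}$. Conclusion (i) is then immediate: if $Kx=\widehat A_{12}^TP_0x=0$, then, using $A_{11}P_0=0$ from \eqref{A11-decomposition},
$$\widehat A_1\begin{pmatrix}P_0x\\[1mm]0\end{pmatrix}=\begin{pmatrix}A_{11}P_0x\\[1mm]\widehat A_{12}^TP_0x\end{pmatrix}=0,$$
so $P_0x=0$ by invertibility of $\widehat A_1$, hence $x=0$ since $P_0$ has full column rank; thus $K$ is injective, which forces $r-n_0\ge n_{10}$ and the stated full-rank property. (Equivalently, (i) is Proposition 3.2 of \cite{ZY3} read under the substitution above.)

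Conclusion (ii) --- the representation \eqref{eq:boundary-layer-correction}, the reduced ODE \eqref{ODEw}, the formulas for $N$ and $X$, the definition of $\widetilde K$, and the invertibility of $(\widetilde K^TX\widetilde K)^{-1}(\widetilde K^T\widehat S\widetilde K)$ --- is then Propositions 3.2 and 3.3 of \cite{ZY3} transcribed under the same substitution, using that $P_1,P_0,\Lambda_1$ of \eqref{A11-decomposition} serve the hatted system exactly as they serve the original one in \cite{ZY3} (the hatted leading block being literally $A_{11}$). The only quantitative point needing care is the eigenvalue count: the stable-eigenvalue number in \cite{ZY3} equals (positive eigenvalues of the leading matrix) minus (positive eigenvalues of the leading block) minus (zero-multiplicity of the leading block), which under the substitution becomes $n_+-n_{1+}-n_{10}$, as claimed. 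The degenerate case $r=n_0$ is covered by the Remark following Proposition \ref{prop4.1}, in which $R_{02}^{\perp}$, $\widehat A_{12}$, $\widehat A_{22}$ and $\widehat S$ are void.

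I expect no genuinely new obstacle: the whole task is to confirm that none of the structural ingredients on which the arguments of \cite{ZY3} rely --- $\widehat A_1$ symmetric invertible, $\widehat Q=\mathrm{diag}(0,\widehat S)$ with $\widehat S$ symmetric negative definite, $A_{11}$ possibly singular --- is lost in passing from the original system to the reduced equation \eqref{M001}; none is, precisely because $R_1$ was chosen in Proposition \ref{prop4.1} so as to make $\widehat Q$ block-diagonal. The only real risk is clerical: keeping the dimension bookkeeping consistent ($n-r$ for the $u$-block, $r-n_0$ for the hatted $v$-block, $n-n_0$ for the whole reduced state) throughout the transcription.
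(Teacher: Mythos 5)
Your proposal is correct and follows essentially the same route as the paper: the paper's own justification is precisely the observation that, with $R_1$ chosen as in Proposition \ref{prop4.1}, the reduced boundary-layer equation \eqref{M001} has coefficients $\widehat{A}_1$ (symmetric, invertible, leading block $A_{11}$) and $\widehat{Q}=\mathrm{diag}(0,\widehat{S})$ with $\widehat{S}$ symmetric negative definite, so Propositions 3.2 and 3.3 of \cite{ZY3} apply verbatim under the substitution $(n,r,A_1,Q)\rightsquigarrow(n-n_0,r-n_0,\widehat{A}_1,\widehat{Q})$. Your additional direct verification of (i) via $\widehat{A}_1\bigl(\begin{smallmatrix}P_0x\\ 0\end{smallmatrix}\bigr)=0$ and the eigenvalue bookkeeping are consistent with that transcription.
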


This proposition is just the Propositions 3.2 and 3.3 in \cite{ZY3}  with $n$, $r$, $A_1$ and $Q$ replaced by $(n-n_0)$, $(r-n_0)$, $\widehat{A}_1$ and $\widehat{Q}$ respectively. 
Thanks to this proposition, $L_1\begin{pmatrix}
    \mu_0\\
    \nu_0
\end{pmatrix}$ 
can be uniquely determined by solving the ODE system \eqref{ODEw}, defined in the half-space $y\geq 0$, if proper initial conditions are prescribed.
Thus the boundary-layer terms $(\mu_0,\nu_0)$ can be determined by using \eqref{relation0-1} and \eqref{mu0nu0-relation}.

Next we turn to the boundary-layer equation \eqref{P0mu} for the other correction term $(\mu_1,\nu_1)$.
This time-dependent equation is a parabolic system of  
second-order partial differential equations, which can be shown as
\begin{prop}\label{prop44}
    The coefficient matrix $P_0^TA_{12}S^{-1}A_{12}^TP_0$ in \eqref{P0mu} is symmetric negative definite. 
\end{prop}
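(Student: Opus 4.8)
The plan is to split the claim into the easy symmetry part and the substantive definiteness part, and to reduce the latter to a rank statement that is settled by the Shizuta--Kawashima-like condition \eqref{kawa}. Symmetry is immediate: since $S$ is symmetric, so is $S^{-1}$, whence $(P_0^TA_{12}S^{-1}A_{12}^TP_0)^T = P_0^TA_{12}S^{-1}A_{12}^TP_0$.

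For definiteness, I would first note negative semi-definiteness for free. Given any $x\in\mathbb{R}^{n_{10}}$, set $y=A_{12}^TP_0x\in\mathbb{R}^r$; then $x^T\bigl(P_0^TA_{12}S^{-1}A_{12}^TP_0\bigr)x = y^TS^{-1}y \le 0$ because $S$, and hence $S^{-1}$, is negative definite. Consequently the matrix is negative definite provided $y=0$ forces $x=0$, i.e. provided $A_{12}^TP_0$ has trivial kernel. So the whole proposition comes down to showing $\ker(A_{12}^TP_0)=\{0\}$.

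This is the heart of the argument, and it is where the structural hypotheses enter. Suppose $A_{12}^TP_0x=0$. From the spectral decomposition \eqref{A11-decomposition} together with the fact that $(P_1,P_0)$ is a genuine orthogonal matrix, one gets $A_{11}P_0=0$ (not merely $P_0^TA_{11}P_0=0$), so in particular $A_{11}P_0x=0$. Writing $w=P_0x$ and using the block form \eqref{partA}, the vector $\begin{pmatrix}w\\0\end{pmatrix}\in\mathbb{R}^n$ satisfies $A_1\begin{pmatrix}w\\0\end{pmatrix} = \begin{pmatrix}A_{11}w\\A_{12}^Tw\end{pmatrix}=0$, so it lies in $\ker(A_1)$; and since $Q=\mathrm{diag}(0,S)$, it also lies in $\ker(Q)$. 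The Shizuta--Kawashima-like condition \eqref{kawa} then yields $\begin{pmatrix}w\\0\end{pmatrix}=0$, hence $P_0x=w=0$; because $P_0$ has orthonormal columns it is injective, so $x=0$. This establishes the required full-rank property and completes the proof.

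I do not expect any genuine obstacle here: the only point demanding care is the bookkeeping step that upgrades $P_0^TA_{11}P_0=0$ to $A_{11}P_0=0$ (using orthogonality of $(P_1,P_0)$), and then correctly assembling $(w,0)^T$ as a common kernel vector of $A_1$ and $Q$ so that \eqref{kawa} applies. Everything else is elementary linear algebra.
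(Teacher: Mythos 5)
Your proof is correct, and it takes a genuinely different route from the paper's. Both arguments make the same reduction: symmetry is immediate, negative semi-definiteness follows from $S^{-1}<0$, and everything hinges on $A_{12}^TP_0$ having trivial kernel (full column rank). The paper settles that last point in one line by citing Proposition \ref{prop2.1}(i) (imported from \cite{ZY3}, applied to the hatted system built in Section \ref{section4}): since $K=\widehat{A}_{12}^TP_0=(R_{02}^{\perp})^TA_{12}^TP_0$ has rank $n_{10}$, the rank inequality $n_{10}=\mathrm{rank}\big((R_{02}^{\perp})^TA_{12}^TP_0\big)\leq \mathrm{rank}(A_{12}^TP_0)$ forces full rank. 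You instead prove full rank directly from the Shizuta--Kawashima-like condition: upgrading $P_0^TA_{11}P_0=0$ to $A_{11}P_0=0$ via the orthogonal decomposition \eqref{A11-decomposition}, you observe that $A_{12}^TP_0x=0$ makes $(P_0x,0)^T$ a common kernel vector of $A_1$ and $Q$, so \eqref{kawa} kills it and injectivity of $P_0$ gives $x=0$; this bookkeeping is done correctly. What each buys: your argument is self-contained and more elementary, exposing that only \eqref{kawa} and the block structure of $A_1$, $Q$ are really needed here, whereas the paper's proof is shorter in context because Proposition \ref{prop2.1} is already required for the boundary-layer analysis (and its full-rank statement ultimately rests on the same structural hypotheses).
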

\begin{proof}
Since $S$ is symmetric negative definite, it suffices to show that $A_{12}^TP_0$ has full-rank. From Proposition \ref{prop2.1} (i) and \eqref{hatA12A22}, we know that 
$$
n_{10}=\text{rank}(K) = \text{rank}\big((R_{02}^{\perp})^TA_{12}^TP_0\big) \leq \text{rank}(A_{12}^TP_0).
$$ 
This completes the proof.
\end{proof}
\noindent Due to this proposition, we know that $n_{10}$ boundary conditions should be given for \eqref{P0mu}.

\subsection{Boundary conditions for the asymptotic expansion}\label{section4.1}

In the previous subsections, we have analyzed the equations for the outer solution and boundary-layer correction terms.
It is pointed out that the hyperbolic system \eqref{3.2} and the parabolic system \eqref{P0mu} need proper boundary conditions while the ODE system \eqref{ODEw} requires initial conditions. Here we turn to these initial and boundary conditions. 

Substituting the asymptotic expansion \eqref{3.1} into the boundary condition in \eqref{Boundary conditions} and matching the coefficient of order $O(1)$, we obtain 
\begin{equation}\label{BC-asymptotic}
B\begin{pmatrix}
        \bar{u} + \mu_1 + \mu_0 \\[2mm]
        \bar{v} + \nu_1 + \nu_0
\end{pmatrix}(0,\hat{x},t) = b(\hat{x},t).    
\end{equation}
Due to $BR_0=0$ assumed in \eqref{2.3}, we have $B=B(R_1L_1+R_0L_0)=BR_1L_1$ and thereby
$$
BR_1L_1 \begin{pmatrix}
        \bar{u} + \mu_1 + \mu_0 \\[2mm]
        \bar{v} + \nu_1 + \nu_0
\end{pmatrix}(0,\hat{x},t) = b(\hat{x},t).
$$
By using \eqref{3.3}, \eqref{eq:blcorrection11},
\eqref{P1mu} and Proposition \ref{prop2.1}, the relation \eqref{BC-asymptotic} can be reduced to
\begin{align*} 
BR_1L_1
 \begin{pmatrix}
        \bar{u} + P_0(P_0^T\mu_1) \\[2mm]
        0
\end{pmatrix}(0,\hat{x},t) + BR_1\begin{pmatrix}
    N\\[2mm]
    \widetilde{K}
\end{pmatrix}w(0,\hat{x},t) = b(\hat{x},t).
\end{align*}
On the other hand, we deduce from $L_1R_1=I_{n-n_0}$ and the expression of $R_1$ in \eqref{R0R1} that
$$
L_1
\begin{pmatrix}
I_{n-r} & 0\\[2mm]
0 & R_{02}^{\perp}
\end{pmatrix} = I_{n-n_0} = \begin{pmatrix}
I_{n-r} & 0\\[2mm]
0 & I_{r-n_0}
\end{pmatrix}
\quad \Rightarrow \quad 
L_1
\begin{pmatrix}
I_{n-r}\\[2mm]
0 
\end{pmatrix} = 
\begin{pmatrix}
I_{n-r}\\[2mm]
0 
\end{pmatrix}.
$$
Moreover, we partition $B=(B_u,B_v)$
as in \eqref{partA} and compute $BR_1 = (B_u,B_vR_{02}^{\perp})$. Thus the relation \eqref{BC-asymptotic} becomes
$$
B_u\bar{u}(0,\hat{x},t) + B_uP_0(P_0^T\mu_1)(0,\hat{x},t) + \left(B_uN + B_v R_{02}^{\perp} \widetilde{K} \right)w(0,\hat{x},t) = b(\hat{x},t).
$$

For the ODE system \eqref{ODEw} to have a bounded solution in $y\geq 0$, the initial data $w(0,\hat{x},t)$
should be given on the stable subspace $\text{span}\{R_2^S\}$ with $R_2^S$ the right-stable matrix of the coefficient matrix 
$\big(\widetilde{K}^TX\widetilde{K}\big)^{-1}\big(\widetilde{K}^T S\widetilde{K}\big)$. Proposition \ref{prop2.1} indicates that $R_2^S$ should be an $(r-n_0-n_{10})\times (n_+-n_{1+}-n_{10})$ matrix. 
Thus we express $w(0,\hat{x},t)=R_2^Sw^S(\hat{x},t)$ with $w^S\in \mathbb{R}^{n_+-n_{1+}-n_{10}}$. Using this, we finally obtain
\begin{align}\label{4.36}
B_u\bar{u}(0,\hat{x},t) + B_uP_0(P_0^T\mu_1)(0,\hat{x},t) + \left(B_uNR_2^S + B_v R_{02}^{\perp} \widetilde{K}R_2^S \right)  w^S(\hat{x},t) = b(\hat{x},t).
\end{align}
From this relation, we will derive the reduced boundary condition for $\bar{u}$ and determine $P_0^T\mu_1(0,\hat{x},t)$ and $w^S(\hat{x},t)$. 

\subsection{Reduced boundary conditions}\label{Section4.4}

To derive the reduced boundary condition from \eqref{4.36}, we follow \cite{Y2,ZY3} and compute the right-stable matrix $R_M^S(\xi,\omega,\eta)$ in the characteristic GKC for sufficiently large $\eta$. Recall that both the characteristic GKC and the matrix defined in \eqref{M}:
$$
M(\xi,\omega,\eta) = \widehat{A}_1^{-1}\Big[G_{11}(\xi,\omega,\eta)-G_{10}(\xi,\omega,\eta)G_{00}^{-1}(\xi,\omega,\eta)G_{01}(\xi,\omega,\eta)\Big]
$$
 involve the matrix $R_1$.
Firstly, we have
\begin{lemma}\label{lemma4.1}
    With the elaborately selected matrix $R_1$ in \eqref{R0R1}, the matrix $M(\xi,\omega,\eta)$ has the following form 
    \begin{align}\label{M-block}
    M(\xi,\omega,\eta) 
    =&~\begin{pmatrix}
        A_{11} & \widehat{A}_{12} \\[2mm]
        \widehat{A}_{12}^T & \widehat{A}_{22} 
    \end{pmatrix}^{-1}
    \left[ \eta \begin{pmatrix}
        0 & 0 \\[2mm]
        0 & \widehat{S} 
    \end{pmatrix} - 
    \begin{pmatrix}
        \xi I_{n-r} + C(\omega) & \star \\[2mm]
        \star & \star
    \end{pmatrix}
    \right]+O\Big(\frac{1}{\eta}\Big)
\end{align}
for sufficiently large $\eta$. 
Here 
$$
C(\omega)=i\sum_{j=2}^d \omega_jA_{j11},
$$ 
the notation $\star$
means a matrix independent of the parameter $\eta$ and the details are omitted.
\end{lemma}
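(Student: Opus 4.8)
The plan is to expand $M(\xi,\omega,\eta)$ in powers of $1/\eta$ directly from its definition in \eqref{M}, isolating the $\eta$-linear and $\eta$-independent contributions and exploiting the block structure forced by the special choice of $R_1$ in \eqref{R0R1}. This mirrors the large-$\eta$ expansions of matrices carried out in \cite{Y2,ZY3}.

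First I would write the building block $G(\xi,\omega,\eta)=\eta Q-\xi I_n-i\sum_{j=2}^d\omega_j A_j$, so that for $k,l\in\{0,1\}$
$$G_{kl}(\xi,\omega,\eta)=\eta\,Q_{kl}-E_{kl},\qquad Q_{kl}:=R_k^TQR_l,\quad E_{kl}:=\xi R_k^TR_l+i\sum_{j=2}^d\omega_j R_k^TA_jR_l,$$
with $Q_{kl}$ and $E_{kl}$ independent of $\eta$. By Proposition \ref{new-prop2.1} (equivalently Proposition \ref{prop4.1}) the matrix $Q_{00}=R_{02}^TSR_{02}$ is invertible; hence for $\eta$ large, $G_{00}=\eta Q_{00}-E_{00}$ is invertible with the Neumann expansion $G_{00}^{-1}=\eta^{-1}Q_{00}^{-1}+O(\eta^{-2})$. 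Substituting this into $M=\widehat A_1^{-1}\big[G_{11}-G_{10}G_{00}^{-1}G_{01}\big]$ and collecting powers of $\eta$ gives
$$M(\xi,\omega,\eta)=\widehat A_1^{-1}\Big[\eta\big(Q_{11}-Q_{10}Q_{00}^{-1}Q_{01}\big)-\big(E_{11}+\mathcal E\big)\Big]+O(\eta^{-1}),$$
where $\mathcal E=Q_{10}Q_{00}^{-1}E_{00}Q_{00}^{-1}Q_{01}-Q_{10}Q_{00}^{-1}E_{01}-E_{10}Q_{00}^{-1}Q_{01}$ is independent of $\eta$. The $\eta$-linear coefficient is exactly the matrix $\widehat Q$ of \eqref{hatA-hatQ}, which by \eqref{new:hatQ} equals $\mathrm{diag}(0,\widehat S)$; this yields the first term in \eqref{M-block}.

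It then remains to identify the $(1,1)$ block of $E_{11}+\mathcal E$ (the top-left $(n-r)\times(n-r)$ block in the partition induced by $R_1=\mathrm{diag}(I_{n-r},R_{02}^\perp)$); all other blocks, being $\eta$-independent, are absorbed into $\star$. From $R_1=\mathrm{diag}(I_{n-r},R_{02}^\perp)$ one reads off $R_1^TR_1=\mathrm{diag}(I_{n-r},(R_{02}^\perp)^TR_{02}^\perp)$ and $R_1^TA_jR_1=\begin{pmatrix}A_{j11}&A_{j12}R_{02}^\perp\\ (R_{02}^\perp)^TA_{j12}^T&(R_{02}^\perp)^TA_{j22}R_{02}^\perp\end{pmatrix}$, so the $(1,1)$ block of $E_{11}$ is $\xi I_{n-r}+i\sum_{j=2}^d\omega_j A_{j11}=\xi I_{n-r}+C(\omega)$. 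For $\mathcal E$ I would use that $Q=\mathrm{diag}(0,S)$ together with the block form of $R_1$ force $Q_{10}=R_1^TQR_0=\begin{pmatrix}0\\ (R_{02}^\perp)^TSR_{02}\end{pmatrix}$ (vanishing top $n-r$ rows) and $Q_{01}=R_0^TQR_1=\begin{pmatrix}0&R_{02}^TSR_{02}^\perp\end{pmatrix}$ (vanishing left $n-r$ columns). The first two terms of $\mathcal E$ carry $Q_{10}$ as a left factor and the last carries $Q_{01}$ as a right factor, so in each case the top-left $(n-r)\times(n-r)$ block vanishes; hence the $(1,1)$ block of $E_{11}+\mathcal E$ is precisely $\xi I_{n-r}+C(\omega)$, which is the claim.

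The only genuinely delicate point is the validity of the expansion $G_{00}^{-1}=\eta^{-1}Q_{00}^{-1}+O(\eta^{-2})$: it requires $\|\eta^{-1}Q_{00}^{-1}E_{00}\|<1$, which holds once $\eta$ is large and is exactly where the qualifier ``for sufficiently large $\eta$'' enters; one should also note that multiplying the $O(\eta^{-1})$ remainder by the fixed matrix $\widehat A_1^{-1}$ keeps it $O(\eta^{-1})$, which is immediate. The remainder of the argument is routine bookkeeping of block products, using only $Q=\mathrm{diag}(0,S)$ and the explicit shape of $R_1$ from Proposition \ref{prop4.1}.
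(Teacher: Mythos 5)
Your proposal is correct and follows essentially the same route as the paper: a Neumann expansion of $G_{00}^{-1}=\eta^{-1}(R_0^TQR_0)^{-1}+O(\eta^{-2})$, identification of the $\eta$-linear coefficient with $\widehat{Q}=\mathrm{diag}(0,\widehat{S})$ from \eqref{new:hatQ}, and block bookkeeping using $Q=\mathrm{diag}(0,S)$ and the explicit $R_1$ of \eqref{R0R1} to see that the correction terms do not touch the top-left $(n-r)\times(n-r)$ block. The only cosmetic difference is that the paper packages your $E_{11}+\mathcal{E}$ as a three-factor product $\bigl[R_1^T-(R_1^TQR_0)(R_0^TQR_0)^{-1}R_0^T\bigr]H\bigl[R_1-R_0(R_0^TQR_0)^{-1}(R_0^TQR_1)\bigr]$ before reading off the $(1,1)$ block, which is equivalent to your direct argument via the vanishing top rows of $Q_{10}$ and left columns of $Q_{01}$.
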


\begin{proof}
Recall the definition in \eqref{defM} that
\begin{equation}\label{H}
G_{kl}(\xi,\omega,\eta) = \eta (R_k^T Q R_l) + R_k^THR_l \quad \text{with} \quad H = - \xi I - i\sum_{j=2}^d \omega_jA_j.  
\end{equation}
For sufficiently large $\eta$, it is not difficult to compute the inverse matrix
$$
G_{00}^{-1}(\xi,\omega,\eta) = \frac{1}{\eta} (R_0^TQR_0)^{-1} - \frac{1}{\eta^2} (R_0^TQR_0)^{-1}(R_0^THR_0)(R_0^TQR_0)^{-1} + O\Big(\frac{1}{\eta^3}\Big).
$$    
Thus we have 
\begin{align*}
&G_{11}(\xi,\omega,\eta)-G_{10}(\xi,\omega,\eta)G_{00}^{-1}(\xi,\omega,\eta)G_{01}(\xi,\omega,\eta) \\[3mm]
=&~ \eta\Big(R_1^TQR_1 + \frac{1}{\eta} R_1^THR_1\Big) - \eta\Big(R_1^TQR_0 + \frac{1}{\eta} R_1^THR_0\Big)\\[2mm]
&~\Big((R_0^TQR_0)^{-1} - \frac{1}{\eta} (R_0^TQR_0)^{-1}(R_0^THR_0)(R_0^TQR_0)^{-1}\Big)
\Big(R_0^TQR_1 + \frac{1}{\eta} R_0^THR_1\Big)+O\Big(\frac{1}{\eta}\Big) \\[2mm]
=&~\eta \widehat{Q} + \widehat{H} +  O\Big(\frac{1}{\eta}\Big)
\end{align*}
with coefficients 
\begin{align*}
\widehat{Q} =&~ R_1^TQR_1 - R_1^TQR_0 (R_0^TQR_0)^{-1} R_0^TQR_1,\\[3mm]
\widehat{H} =&~ R_1^THR_1 - (R_1^TQR_0)(R_0^TQR_0)^{-1}(R_0^THR_1)
- (R_1^THR_0)(R_0^TQR_0)^{-1}(R_0^TQR_1)\quad \\[2mm]
&~+ (R_1^TQR_0)(R_0^TQR_0)^{-1}(R_0^THR_0)(R_0^TQR_0)^{-1}(R_0^TQR_1). 
\end{align*}
Notice that the matrix $\widehat{Q}$ has been computed in \eqref{new:hatQ} and $\widehat{H}$ can be reorganized as 
\begin{align*}
\widehat{H} =&~ \Big[R_1^T - (R_1^TQR_0)(R_0^TQR_0)^{-1}R_0^T\Big] H  \Big[ R_1 - R_0(R_0^TQR_0)^{-1}(R_0^TQR_1)\Big].
\end{align*}
By the definition of $H$ in \eqref{H}, we can write 
\begin{equation*}
H = 
\begin{pmatrix}
    - \xi I_{n-r} - C(\omega) & \star \\[2mm]
    \star & \star
\end{pmatrix}\quad \text{with} \quad C(\omega)=i\sum_{j=2}^d \omega_jA_{j11}. 
\end{equation*}
Moreover, we use the expression of $R_1$ in 
\eqref{R0R1} to compute
\begin{align*}
R_1 - R_0(R_0^TQR_0)^{-1}R_0^TQR_1 = 
&\begin{pmatrix}
I_{n-r} & 0 \\[2mm]
0 & R_{02}^{\perp}
\end{pmatrix}
-
\begin{pmatrix}
R_{01} \\[2mm]
R_{02}
\end{pmatrix}\left(R_{02}^TSR_{02}\right)^{-1}
\begin{pmatrix}
R_{01}^T & R_{02}^T
\end{pmatrix}
\begin{pmatrix}
0 & 0 \\[2mm]
0 & SR_{02}^{\perp}
\end{pmatrix}
\\[2mm]
=&\begin{pmatrix}
I_{n-r} & \star \\[2mm]
0 & \star
\end{pmatrix}.
\end{align*}
Thus it follows that 
\begin{align*}
\widehat{H} =&~ \begin{pmatrix}
I_{n-r} & 0 \\[2mm]
 \star & \star
\end{pmatrix}
\begin{pmatrix}
    - \xi I_{n-r} - C(\omega) & \star \\[2mm]
    \star & \star
\end{pmatrix}
\begin{pmatrix}
I_{n-r} & \star \\[2mm]
 0 & \star
\end{pmatrix}
=
-\begin{pmatrix}
 \xi I_{n-r} + C(\omega) & \star \\[2mm]
 \star & \star
\end{pmatrix}.
\end{align*}
This completes the proof.
\end{proof}

Observe that, without the high-order term $O(1/\eta)$, 
the expression of $M(\xi,\omega,\eta)$ in \eqref{M-block} is the same as that in \cite{ZY3} by dropping out the symbol "$~\widehat{~}~$". Thus we can refer to the argument in Section 4 of \cite{ZY3}, with a series of subtle matrix transformations, to have the following lemma. Note that the high-order term does not affect the argument in \cite{ZY3}.
\begin{lemma}[\cite{ZY3}]\label{lemma4.2}
As $\eta$ goes to infinity, the right-stable matrix for $M(\xi,\omega,\eta)$ can be expressed by
\begin{align*}
R_M^S(\xi,\omega,\infty) = 
\begin{pmatrix}
\Big[P_1-P_0\left[\xi I_{n_{10}}+P_0^TC(\omega)P_0\right]^{-1}(P_0^TC(\omega)P_1)\Big]R_{1}^S  & P_0 & NR_{2}^S \\[2mm]
0 & 0  & \widetilde{K} R_{2}^S
\end{pmatrix}.
\end{align*}
Here the matrices $\widetilde{K}$ and $N$ are given in Proposition \ref{prop2.1}.
Moreover, $R_1^S\in\mathbb{R}^{(n-r)\times n_{1+}}$ is the right-stable matrix for 
$$
M_1(\xi,\omega) = -\Lambda_1^{-1}\bigg(\big[\xi I +P_1^TC(\omega)P_1\big]-P_1^TC(\omega)P_0\big[\xi I + P_0^TC(\omega)P_0 \big]^{-1}P_0^TC(\omega)P_1 \bigg)
$$
and $R_2^S\in \mathbb{R}^{(r-n_0-n_{10})\times (n_+-n_{1+}-n_{10})}$ is the right-stable matrix for 
$$
M_2=(\widetilde{K}^TX\widetilde{K})^{-1}(\widetilde{K}^T \widehat{S}\widetilde{K}).
$$
\end{lemma}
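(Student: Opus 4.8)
The plan is to reduce the $(n-n_0)$-dimensional spectral problem for $M(\xi,\omega,\eta)$ as $\eta\to\infty$ to the two smaller problems governed by $M_1(\xi,\omega)$ and $M_2$, exactly mirroring the analysis in Section~4 of \cite{ZY3}. Writing $L_1\widehat U$ in the coordinates adapted to the block structure of $\widehat A_1$ and $\widehat Q$ in \eqref{new:hatQ}, the ODE $(L_1\widehat U)_{x_1}=M(\xi,\omega,\eta)(L_1\widehat U)$ splits, at leading order in $\eta$, into a fast part (driven by $\widehat S$, living on the $\widetilde K$-modes) and a slow part (the remaining $(n-r)$-dimensional block, driven by $\xi I_{n-r}+C(\omega)$). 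First I would perform the same sequence of similarity transformations as in \cite{ZY3}: use the orthonormal splitting $(P_1,P_0)$ from \eqref{A11-decomposition} to separate the invertible part $\Lambda_1$ of $A_{11}$ from its kernel, invert $\widehat A_1$ blockwise via the Schur complement, and then track which blocks survive the $\eta\to\infty$ limit. The upshot is that the stable subspace of $M(\xi,\omega,\eta)$ converges, as $\eta\to\infty$, to the span of the columns displayed in the statement; the three column-groups correspond respectively to the $n_{1+}$ stable modes of $M_1$ (the genuine hyperbolic modes of the equilibrium system), the $n_{10}$ "central" modes carried by $P_0$ (which are stable for all large $\eta$ because of the $\eta\widehat S$ term acting through $K$), and the $(n_+-n_{1+}-n_{10})$ stable modes of $M_2$ coming from the parabolic boundary-layer ODE \eqref{ODEw}.

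The key steps, in order, are: (1) substitute the expansion \eqref{M-block} into the eigenvalue equation $M(\xi,\omega,\eta)\phi=\lambda\phi$ and rescale so that the $O(\eta)$ terms become $O(1)$; (2) apply the Schur-complement factorization of $\widehat A_1^{-1}$ to decouple the $P_0$-block from the $P_1$-block, producing the reduced matrix $M_1(\xi,\omega)$ on the $P_1$-modes after eliminating the $P_0$-modes (this is where the $\bigl[\xi I+P_0^TC(\omega)P_0\bigr]^{-1}$ correction enters the first column); (3) on the complementary $R_{02}^{\perp}$-block, use the full-rank matrix $K=\widehat A_{12}^TP_0$ from Proposition~\ref{prop2.1}(i) to split into the $K$-directions (which acquire eigenvalues of size $O(\eta)$ — these must be \emph{excluded} from the stable count used for the rBC, since they blow up, but the corresponding $P_0$-modes on top remain) and the orthogonal-complement $\widetilde K$-directions, on which the limiting matrix is precisely $M_2=(\widetilde K^TX\widetilde K)^{-1}(\widetilde K^T\widehat S\widetilde K)$; (4) assemble $R_M^S(\xi,\omega,\infty)$ from the three pieces, checking the column counts add up to $n_{1+}+n_{10}+(n_+-n_{1+}-n_{10})=n_+$, consistent with Lemma~\ref{lem:3.1}; (5) verify that the $O(1/\eta)$ remainder in \eqref{M-block} does not perturb the limiting stable subspace, using continuity of the spectral projector and the absence of purely imaginary eigenvalues established in the proof of Lemma~\ref{lem:3.1}.

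The main obstacle I anticipate is step~(3): the presence of two separate large-$\eta$ scales on the $\nu$-block (the $O(\eta)$ eigenvalues in the $K$-directions versus the $O(1)$ eigenvalues in the $\widetilde K$-directions) requires a careful two-tier singular perturbation argument, and one must confirm that the $K$-direction modes, although associated with eigenvalues tending to $-\infty$ (hence nominally stable), contribute only the $P_0$ block $\begin{pmatrix}P_0\\0\end{pmatrix}$ to $R_M^S(\xi,\omega,\infty)$ and not a genuine extra hyperbolic direction — this is exactly the subtlety that distinguishes the characteristic-equilibrium case from the earlier non-characteristic ones. The auxiliary matrices $X$ and $N$ from Proposition~\ref{prop2.1} encode precisely the coupling that makes this bookkeeping work, and the computation of $N$ (involving the Schur complement $X=\widehat A_{22}-\widehat A_{12}^TP_1\Lambda_1^{-1}P_1^T\widehat A_{12}$) is the technically heaviest part. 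Since \cite{ZY3} carried out this very computation with $A_1,Q,n,r$ in place of $\widehat A_1,\widehat Q,n-n_0,r-n_0$, and since Lemma~\ref{lemma4.1} shows \eqref{M-block} has the same structure as the matrix there up to the harmless $O(1/\eta)$ term, I would invoke that analysis verbatim rather than reprove it, and the proof reduces to verifying that all the hypotheses of \cite{ZY3} (symmetry of $\widehat A_1$, negative-definiteness of $\widehat S$, full-rankness of $K$) hold in the present setting — which is exactly what Propositions~\ref{prop4.1}--\ref{prop44} establish.
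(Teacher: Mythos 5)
Your operative strategy is exactly the paper's: the paper gives no independent proof of Lemma \ref{lemma4.2} either, but argues precisely as in your final paragraph --- Lemma \ref{lemma4.1} shows that, with the elaborately chosen $R_1$ in \eqref{R0R1}, the large-$\eta$ expansion \eqref{M-block} has the same block structure as the matrix analyzed in Section 4 of \cite{ZY3} (with $\widehat{A}_1,\widehat{Q},n-n_0,r-n_0$ in place of $A_1,Q,n,r$), the hypotheses needed there (symmetry of $\widehat{A}_1$, negative definiteness of $\widehat{S}$, full rank of $K$) are supplied by Propositions \ref{prop4.1}--\ref{prop44}, the $O(1/\eta)$ remainder is noted to be harmless, and the conclusion is then quoted from \cite{ZY3}. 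So the citation-based proof you settle on is the paper's proof.

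One correction to your heuristic sketch of the internal mechanics in step (3), where the scales are swapped: the $\widetilde{K}$-directions carry the \emph{fast} modes --- their eigenvalues behave like $\eta\mu$ with $\mu$ an eigenvalue of $M_2$, hence $O(\eta)$, and their limiting directions produce the columns built from $NR_2^S$ and $\widetilde{K}R_2^S$; the $P_0$/$K$-coupled modes form the intermediate tier with eigenvalues of size $O(\sqrt{\eta})$, coming from a quadratic pencil that couples $\xi I_{n_{10}}+P_0^TC(\omega)P_0$ with $K^T\widehat{S}^{-1}K$, and it is the $n_{10}$ stable members of this tier whose directions collapse onto the $P_0$ columns; only the $M_1$-modes remain $O(1)$. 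Also, nothing stable is ``excluded from the stable count'': all $n_+$ stable directions, including those whose eigenvalues tend to $-\infty$, appear in $R_M^S(\xi,\omega,\infty)$, and it is only later, in Theorem \ref{theorem43}, that $B_o$ is constructed to annihilate the columns $Y_2,Y_3$ originating from the blown-up modes. These slips would derail the bookkeeping if you tried to execute the expansion yourself, but they do not affect the proof you actually propose, which (like the paper's) rests on \cite{ZY3}.
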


\begin{remark}
$M_2$ is just the coefficient matrix for the ODE system  \eqref{ODEw}. The matrix $M_1(\xi,\omega)$ is related to the Uniform Kreiss Condition(UKC) for the equilibrium system \eqref{3.2}.
To see this, we consider the corresponding eigenvalue problem of \eqref{3.2}:
$$
\xi\widehat{u}+A_{11}\widehat{u}_{x_1}+C(\omega)\widehat{u} = 0 
\quad \text{with} \quad C(\omega)=i\sum_{j=2}^d \omega_jA_{j11}.
$$
As in \eqref{eq:3.4} and \eqref{equationM}, we can easily deduce that 
\begin{align*}
P_0^T\widehat{u}=&~-[\xi I+P_0^TC(\omega)P_0]^{-1}P_0^TC(\omega)P_1(P_1^T\widehat{u}), \\[2mm]
(P_1^T\widehat{u})_{x_1}=&~-\Lambda_1^{-1}\left(\big[\xi I +P_1^TC(\omega)P_1\big] -P_1^TC(\omega)P_0\big[\xi I + P_0^TC(\omega)P_0 \big]^{-1}P_0^TC(\omega)P_1 \right)(P_1^T\widehat{u})\\[2mm]
\equiv &~M_1(\xi,\omega)(P_1^T\widehat{u}).
\end{align*}
By using the same argument as that in Section \ref{sec3}, the UKC for the boundary condition $\bar{B}\bar{u}(0,\hat{x},t)=\bar{b}(\hat{x},t)$ of the equilibrium system reads as 
$$
|\det\{\bar{B}P_1R_{1}^S\}|\geq c_K\sqrt{\det\{R_{1}^{S*}R_{1}^S \}}
$$ 
with $R_1^S$ the right-stable matrix for 
$M_1(\xi,\omega)$. 
\end{remark}

Now we state our main result with the partition $B=(B_u,B_v)$ as in Subsection \ref{section4.1}.
\begin{theorem}\label{theorem43}
Under the assumptions in Section \ref{Section2}, there exists a full-rank matrix $B_o\in\mathbb{R}^{n_{1+}\times n_+}$
such that the relation
\begin{equation}\label{reduced-BC}
B_oB_u \bar{u}(0, \hat{x},t) = B_ob(\hat{x},t),    
\end{equation}
as a boundary condition for the equilibrium system \eqref{3.2}, satisfies $B_oB_uP_0 = 0$ and the Uniform Kreiss
Condition \cite{MO} for the characteristic IBVPs:
$$
    |\text{det}\{B_0B_uP_1R_1^S\}|\geq \bar{c}_K\sqrt{\text{det}\{R_1^{S*}R_1^S\} }
$$
with $\bar{c}_K$ a positive constant and $R_1^S$ the right-stable matrix of $M_1(\xi,\omega)$.
\end{theorem}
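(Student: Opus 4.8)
The plan is to read off the needed information from the characteristic GKC in the regime $\eta\to\infty$, where Lemma \ref{lemma4.2} supplies an explicit right-stable matrix. Because the quotient $|\text{det}\{BR_1R_M^S\}|/\sqrt{\text{det}\{R_M^{S*}R_M^S\}}$ does not depend on the particular right-stable matrix chosen (Lemma 3.3 in \cite{Y2}) and varies continuously with the stable subspace of $M(\xi,\omega,\eta)$, hence with $\eta$, the c-GKC passes to the limit: with $R_M^S(\xi,\omega,\infty)$ as in Lemma \ref{lemma4.2},
$$
|\text{det}\{BR_1R_M^S(\xi,\omega,\infty)\}|\geq c_K\sqrt{\text{det}\{R_M^{S*}(\xi,\omega,\infty)R_M^S(\xi,\omega,\infty)\}}
$$
for all $\omega\in\mathbb{R}^{d-1}$ and $\xi$ with $Re\xi>0$. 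Using $BR_1=(B_u,B_vR_{02}^{\perp})$ together with the block form in Lemma \ref{lemma4.2}, I would write $BR_1R_M^S(\xi,\omega,\infty)=(W_1,W_0,W_2)$ as an $n_+\times n_+$ matrix, where
$$
W_1 = B_u\Big[P_1-P_0\big(\xi I_{n_{10}}+P_0^TC(\omega)P_0\big)^{-1}P_0^TC(\omega)P_1\Big]R_1^S,\qquad W_0 = B_uP_0,\qquad W_2 = B_uNR_2^S + B_vR_{02}^{\perp}\widetilde{K}R_2^S.
$$
Only $W_1$ depends on $(\xi,\omega)$, while $W_0$ and $W_2$ are exactly the (parameter-free) coefficients standing in front of $P_0^T\mu_1(0,\hat{x},t)$ and $w^S(\hat{x},t)$ in \eqref{4.36}.

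The second step constructs $B_o$. The displayed inequality forces $(W_1,W_0,W_2)$ to be invertible for every admissible $(\xi,\omega)$, so the parameter-free block $V:=(W_0,W_2)\in\mathbb{R}^{n_+\times (n_+-n_{1+})}$ has full column rank; I then fix a full-rank $B_o\in\mathbb{R}^{n_{1+}\times n_+}$ whose rows span the left null space of $V$, i.e.\ $B_oW_0=0$ and $B_oW_2=0$. In particular $B_oB_uP_0=0$, which is one of the asserted properties. Multiplying the matching relation \eqref{4.36} on the left by $B_o$ then kills both the $P_0^T\mu_1$ and the $w^S$ terms and leaves $B_oB_u\bar{u}(0,\hat{x},t)=B_ob(\hat{x},t)$, which is \eqref{reduced-BC}. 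Furthermore, since $B_oB_uP_0=0$, the parameter-dependent correction inside $W_1$ disappears under left multiplication by $B_o$, so that $B_oW_1=B_oB_uP_1R_1^S$ — precisely the matrix appearing in the claimed UKC.

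It remains to establish the lower bound for $|\text{det}\{B_oB_uP_1R_1^S\}|=|\text{det}\{B_oW_1\}|$. The stacked matrix $\begin{pmatrix}B_o\\ V^T\end{pmatrix}$ is invertible (its row spaces are orthogonal complements in $\mathbb{R}^{n_+}$), and since $B_oV=0$ the product
$$
\begin{pmatrix}B_o\\ V^T\end{pmatrix}(W_1,V)=\begin{pmatrix}B_oW_1 & 0\\ V^TW_1 & V^TV\end{pmatrix}
$$
is block triangular, whence $|\text{det}\{B_oW_1\}|$ equals $|\text{det}\{BR_1R_M^S(\xi,\omega,\infty)\}|$ times the fixed positive constant $|\text{det}\{(B_o^T,V)\}|/\text{det}(V^TV)$. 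Combining this with the limiting c-GKC reduces matters to bounding $\sqrt{\text{det}\{R_M^{S*}R_M^S\}}$ below by a fixed multiple of $\sqrt{\text{det}\{R_1^{S*}R_1^S\}}$. For that I would perform one column operation involving the $\begin{pmatrix}P_0\\0\end{pmatrix}$-block column of $R_M^S(\xi,\omega,\infty)$ (which preserves the Gram determinant) and then the orthogonal change of coordinates built from $(P_1,P_0)$, bringing $R_M^S(\xi,\omega,\infty)$ into a block upper triangular matrix with diagonal blocks $R_1^S$, $I_{n_{10}}$ and $\widetilde{K}R_2^S$; extracting the first two block columns, which sit in mutually orthogonal coordinate subspaces, and using monotonicity of $\text{det}$ on positive semidefinite matrices for the third one gives $\sqrt{\text{det}\{R_M^{S*}R_M^S\}}\geq \sqrt{\text{det}\{R_1^{S*}R_1^S\}}\cdot\sqrt{\text{det}\{(R_2^S)^*\widetilde{K}^T\widetilde{K}R_2^S\}}$, the last factor being a positive constant since $\widetilde{K}R_2^S$ has full column rank. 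Collecting all constants yields $\bar{c}_K>0$ and the asserted inequality. I expect the chief obstacle to be the rigorous justification of the $\eta\to\infty$ passage in the c-GKC — convergence of the stable subspace of $M(\xi,\omega,\eta)$ and uniformity of the resulting constant in $(\xi,\omega)$ — together with the careful Gram-determinant bookkeeping; both can be carried out following Section 4 of \cite{ZY3}.
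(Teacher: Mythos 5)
Your proposal is correct and follows essentially the same route as the paper's proof: pass to $\eta\to\infty$ in the characteristic GKC, use the explicit $R_M^S(\xi,\omega,\infty)$ from Lemma \ref{lemma4.2}, choose $B_o$ to annihilate the parameter-independent columns $\big(B_uP_0,\ B_uNR_2^S+B_vR_{02}^{\perp}\widetilde{K}R_2^S\big)$, and convert the limiting inequality into the UKC via a block-triangular determinant identity together with a lower bound of $\det\{R_M^{S*}R_M^S\}$ by $\det\{R_1^{S*}R_1^S\}$ times a parameter-free constant. Your use of $V^T$ in place of the paper's $\widetilde{B}_o$ and your Schur-complement/projection argument for the Gram-determinant bound are only minor variants of the paper's factorization through $N=X_2\widetilde{K}+P_0X_3$ and Lemma 3.3 of \cite{Y2}, and the $\eta\to\infty$ passage you flag is handled in the paper exactly as you suggest, by relying on the expansion machinery of \cite{ZY3}.
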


\begin{proof}
The proof is quite similar to that in \cite{ZY3}. For completeness, we present the full details here. Taking $\eta\rightarrow \infty$ in the characteristic GKC, we have 
$$
|\text{det}\{BR_1 R_M^S(\xi,\omega,\infty)\}|\geq c_K\sqrt{\text{det}\{R_M^{S*}(\xi,\omega,\infty)R_M^S(\xi,\omega,\infty)\} }.
$$
This indicates the invertibility of $BR_1 R_M^S(\xi,\omega,\infty)$. 
By using the expression of $R_M^S(\xi,\omega,\infty)$ in Lemma \ref{lemma4.2}, we can write
$$
BR_1 R_M^S(\xi,\omega,\infty) = (Y_1,Y_2,Y_3)
$$
with
\begin{align*}
    &Y_1 = B_u \Big[P_1-P_0\left[\xi I_{n_{10}}+P_0^TC(\omega)P_0\right]^{-1}(P_0^TC(\omega)P_1)\Big]R_{1}^S, \\[2mm]
&Y_2 = B_u P_0,\qquad 
Y_3 = B_uNR_{2}^S  + B_v R_{02}^{\perp} \widetilde{K}R_{2}^S
.
\end{align*}
Notice that $Y_1\in \mathbb{R}^{n_+\times n_{1+}}$, $Y_2\in \mathbb{R}^{n_+\times n_{10}}$ and $Y_3 \in \mathbb{R}^{n_+\times (n_+-n_{1+}-n_{10})}$. 
Then there is a full-rank matrix $B_o\in \mathbb{R}^{n_{1+} \times n_{+}}$ such that 
$$
B_o\begin{pmatrix}
    Y_2 & Y_3
\end{pmatrix} = 0.
$$
Taking $\widetilde{B}_o\in \mathbb{R}^{ (n_+-n_{1+})\times n_+}$ such that $\begin{pmatrix}
    B_o\\
    \widetilde{B}_o
\end{pmatrix}$ is invertible, it follows that
\begin{align*}
\begin{pmatrix}
    B_o\\[2mm]
    \widetilde{B}_o
\end{pmatrix}BR_1 R_M^S(\xi,\omega,\infty)=
\begin{pmatrix}
    B_o\\[2mm]
    \widetilde{B}_o
\end{pmatrix}
\begin{pmatrix}
    Y_1 & Y_2 & Y_3
\end{pmatrix}
=
\begin{pmatrix}
    B_oY_1 & 0 & 0 \\[2mm]
    \widetilde{B}_oY_1 & \widetilde{B}_oY_2 & \widetilde{B}_oY_3
\end{pmatrix}.
\end{align*}

By Lemma \ref{lemma4.2}, $R_2^S$ (in $Y_3$) is a right-stable matrix of $M_2=(\widetilde{K}^TX\widetilde{K})^{-1}\widetilde{K}^T\widehat{S}\widetilde{K}$, which is independent of parameters $\xi$ and $\omega$. Thus $B_o$ and $\widetilde{B}_o$ are independent of the parameters and we have
\begin{align*}
    |\text{det}\{B_oY_1\}|=&~\left|\text{det}\{(\widetilde{B}_oY_2,~\widetilde{B}_oY_3)\}^{-1}\right|
    \left|\text{det}\left\{\begin{pmatrix}
    B_o\\[2mm]
    \widetilde{B}_o
\end{pmatrix}\right\}\right|
\left|\text{det}\{BR_1 R_M^S(\xi,\omega,\infty)\}\right|\\[2mm]
\geq &~ c_1 \left|\text{det}\{BR_1 R_M^S(\xi,\omega,\infty)\}\right|
\end{align*}
with $c_1>0$ a constant independent of the parameters. 
Since $B_oY_2=B_oB_uP_0=0$, it follows from the expression of $Y_1$ that $B_oY_1=B_oB_uP_1R_{1}^S$.
Thus the last inequality becomes 
\begin{equation}\label{proof4.3-eq1}
|\text{det}\{B_oB_uP_1R_{1}^S\}| \geq c_1 \left|\text{det}\{BR_1 R_M^S(\xi,\omega,\infty)\}\right|.
\end{equation}

Next we use Lemma \ref{lemma4.2} and rewrite
\begin{eqnarray*}
R_M^S(\xi,\omega,\infty) = \begin{pmatrix}
P_1R_1^S & P_0 & NR_{2}^S \\[2mm]
0 & 0 & \widetilde{K} R_{2}^S
\end{pmatrix}
\begin{pmatrix}
I & 0 & 0 \\[1mm]
X_1 & I & 0 \\[1mm]
0 & 0 & I
\end{pmatrix},
\end{eqnarray*}
where $X_1 = -[\xi I_{n_{10}}+P_0^TC(\omega)P_0]^{-1}(P_0^TC(\omega)P_1) R_{1}^S$. Moreover, it follows from the expression of $N$ in Proposition \ref{prop2.1}:
$$
N = X_2\widetilde{K}+P_0X_3, 
$$ 
with $X_2$ and $X_3$ clearly defined and independent of $(\xi,\omega)$, that 
\begin{eqnarray*}
\begin{pmatrix}
P_1R_1^S & P_0 & NR_{2}^S \\[2mm]
0 & 0 & \widetilde{K} R_{2}^S
\end{pmatrix}
= \begin{pmatrix}
I & X_2 \\[2mm]
0 & I
\end{pmatrix}
\begin{pmatrix}
P_1R_1^S & P_0 & 0\\[2mm]
0 & 0 & \widetilde{K} R_{2}^S
\end{pmatrix}
\begin{pmatrix}
~I & ~0 & 0 \\[2mm]
~0 & ~I & X_3R_2^S \\[2mm]
~0 & ~0 & I
\end{pmatrix}.
\end{eqnarray*}
Thus we deduce from Lemma 3.3 in \cite{Y2} that
\begin{align*}
\sqrt{\text{det}\{R_M^{S*}(\xi,\omega,\infty)R_M^S(\xi,\omega,\infty)\}}
\geq &~ c_2
\sqrt{\text{det}\left\{
\begin{pmatrix}
R_1^{S*}P_1^T \\[1mm]
P_0^T 
\end{pmatrix}
\begin{pmatrix}
P_1R_1^S & P_0 
\end{pmatrix}
\right\}
\text{det}\left\{
R_{2}^{S*} \widetilde{K}^T\widetilde{K} R_{2}^S
\right\}} \\[2mm]
= &~c_2
\sqrt{\text{det}\{R_1^{S*}R_1^S\} \text{det}\left\{
R_{2}^{S*} \widetilde{K}^T\widetilde{K} R_{2}^S
\right\} }
\end{align*}
with $c_2>0$ depending only on $X_2$. Here we have used the orthogonality of $P_0$ and $P_1$. 
Note that $R_{2}^{S*} \widetilde{K}^T\widetilde{K} R_{2}^S$ is positive definite and independent of the parameters $(\xi,\omega)$. We conclude from the last inequality, together with \eqref{proof4.3-eq1} and the characteristic GKC, that there is a constant $\bar{c}_K$ such that
$$
    |\text{det}\{B_0B_uP_1R_1^S\}|\geq \bar{c}_K\sqrt{\text{det}\{R_1^{S*}R_1^S\} }.
$$
This completes the proof.
\end{proof}

Thanks to Theorem \ref{theorem43}, the outer solution $\bar{u}$ can be uniquely determined by solving the equilibrium system \eqref{3.2} with the reduced boundary condition \eqref{reduced-BC} and proper initial data. Furthermore, we can use \eqref{4.36} to 
obtain the desired boundary conditions for the parabolic system \eqref{P0mu} and initial conditions for the ODE system \eqref{ODEw}.
Indeed, multiplying $\widetilde{B}_o$ on the left of \eqref{4.36} yields
\begin{equation}\label{eq-otherBC}
\widetilde{B}_o\begin{pmatrix}
    B_uP_0, & B_uNR_{2}^S  + B_v R_{02}^{\perp} \widetilde{K}R_{2}^S
\end{pmatrix}
\begin{pmatrix}
P_0^T\mu_1 \\[2mm]
w^S
\end{pmatrix} 
= 
\widetilde{B}_o b(\hat{x},t) - \widetilde{B}_o B_u \bar{u}(0,\hat{x},t).
\end{equation}
Once $\bar{u}$ is solved, this is a system of $(n_+-n_{1+})$ linear algebraic equations for the $n_{10}$ variables $P_0^T\mu_1$ and $(n_+-n_{1+}-n_{10})$ variables $w^S$. 
Following the proof of Theorem \ref{theorem43}, we notice that the coefficient matrix is just $\widetilde{B}_o (Y_2,Y_3)$, which is invertible. Thus $P_0^T\mu_1$ and $w^S$ can be uniquely determined from \eqref{eq-otherBC}, giving the desired boundary and initial conditions.

\section{Validity}\label{section5}

In this section, we show the validity of the reduced boundary condition \eqref{reduced-BC} by examining the discrepancy between the exact solution $U^{\epsilon}$ to the IBVP \eqref{new-2.1} with \eqref{Boundary conditions} and the solution to the 
equilibrium system with \eqref{reduced-BC}. For this purpose, we will estimate the $L^2$-error between $U^{\epsilon}$ and the asymptotic solution $U_\epsilon$ constructed in Section \ref{sec3} when $\epsilon$ is small.
 
\subsection{Asymptotic solutions}
In order to focus on the boundary-layer behaviours, we choose the initial data such that the initial-layer can be neglected. 
To this end, the initial value $U_0(x_1,\hat{x})$ for the relaxation system \eqref{new-2.1} is taken to be in equilibrium:
\begin{equation}\label{5.1}
	U_0(x_1,\hat{x})=\left({\begin{array}{*{20}c}
  \vspace{1.5mm}u_0(x_1,\hat{x}) \\
                0
\end{array}}\right),
\end{equation}
where $u_0(x_1,\hat{x})$ represents the first $(n-r)$ components of $U_0(x_1,\hat{x})$.  
Moreover, we assume that the initial value $U_0(x_1,\hat{x})$ and the boundary condition in \eqref{Boundary conditions} are compatible:
\begin{equation}\label{5.2}
  BU_0(0,\hat{x})=b(\hat{x},0),\qquad \text{for}~\hat{x}\in \mathbb{R}^{d-1}.
\end{equation} 
From \eqref{5.1} and \eqref{5.2}, we see that 
$B_oB_uu_0(0,\hat{x})=B_ob(\hat{x},0)$,
meaning that the reduced boundary condition \eqref{reduced-BC} for the equilibrium system is compatible with the initial value $u_0(x_1,\hat{x})$.

About the asymptotic solution $U_{\epsilon}$ in \eqref{3.1}, we recall from \eqref{3.3}, \eqref{eq:blcorrection11}, \eqref{P1mu} that the coefficients satisfy
$$
\bar{v}=0, \quad \nu_1=0, \quad P_1^T\mu_1=0.
$$
The terms $\bar{u}$, $P_0^T\mu_1$ and $(\mu_0,\nu_0)$ solve the differential equations \eqref{3.2}, \eqref{P0mu} and \eqref{eq:blcorrection12} respectively. According to Theorem \ref{theorem43} and the discussion at the end of Section \ref{section4}, proper boundary and initial conditions have been determined for these equations. 
The initial data for the PDEs \eqref{3.2} and \eqref{P0mu} are the same as those in our previous work \cite{ZY3}:
$$
\bar{u}(x_1,\hat{x},0) = u_0(x_1,\hat{x}),\qquad 
P_1^T\mu_1(z,\hat{x},0)\equiv 0,
$$
which imply
\begin{equation}\label{new:5.4}
U_\epsilon(x_1,\hat{x},0) = U_0(x_1,\hat{x}) = U^\epsilon(x_1,\hat{x},0).
\end{equation}
At last, we determine $\mu_2$ and $\nu_2$ from the relations \eqref{eq:blcorrection16} and
\eqref{eq:blcorrection14}. Note that $\mu_2$ is not fully determined by the equation \eqref{eq:blcorrection16}. 
The interested reader is referred to Section 5.1 in \cite{ZY3} for further details.
For the coefficients thus determined, we make the following regularity assumption.
\begin{assumption}\label{asp5.1}
The expansion coefficients in \eqref{3.1} satisfy
\begin{eqnarray*} 
\left\{{\begin{array}{*{20}l}
\bar{u}, ~P_0^T\mu_1\in L^2([0,T]\times H^1(\mathbb{R}^+\times \mathbb{R}^{d-1})),\\[3mm]
(\mu_0,\nu_0), ~(\mu_2, \nu_2)\in H^1([0,T]\times \mathbb{R}^+ \times \mathbb{R}^{d-1}).
\end{array}}\right. 
\end{eqnarray*}
\end{assumption}

\subsection{Error estimate}

Denote the difference between the exact solution $U^{\epsilon}$ and the asymptotic solution $U_\epsilon$ by
$$
W:=U^\epsilon-U_\epsilon.
$$
We firstly derive the equation for $W$. To this end, we recall the linear operator
$$
\mathcal{L}(U):=\partial_{t}U +\sum\limits_{j=1}^d A_j\partial_{x_j}U-QU/\epsilon 
$$
and compute $\mathcal{L}(W)=\mathcal{L}(U^{\epsilon})-\mathcal{L}(U_{\epsilon})$. Since $U^{\epsilon}$ satisfies the equation \eqref{new-2.1}, we know that $\mathcal{L}(U^{\epsilon})=0$. On the other hand, from \eqref{Residual} and \eqref{residual-1} we know that
$$
\mathcal{L}(U_\epsilon)=\mathcal{L}(U_{outer})+\mathcal{L}(U_{layer}),\qquad 
\mathcal{L}(U_{outer}) = 
\begin{pmatrix}
    0 \\
  \sum\limits_{j=1}^d A_{j12}^T\partial_{x_j}\bar{u}
\end{pmatrix}.
$$
Furthermore, we use \eqref{eq:residualbl} to compute $\mathcal{L}(U_{layer})$ in \eqref{residual-2} as
\begin{align*} 
\mathcal{L}(U_{layer})=&\begin{pmatrix}
\partial_t \mu_0+ \sum\limits_{j=2}^d\Big[A_{j11}\partial_{x_j}\mu_0 + A_{j12}\partial_{x_j}\nu_0\Big] \\
  \partial_t\nu_{0}+ \sum\limits_{j=2}^d\Big[A_{j12}^T\partial_{x_j}(\mu_0+\mu_1)+A_{j22}\partial_{x_j}\nu_{0}\Big]+A_{12}^T\partial_z\mu_2+A_{22}\partial_z\nu_2
\end{pmatrix}\nonumber \\[2mm]
&+ \sqrt{\epsilon}
\left[
\partial_t
\begin{pmatrix}
\mu_2 \\[2mm]
\nu_2    
\end{pmatrix}
+\sum\limits_{j=2}^d
\begin{pmatrix}
A_{j11} & A_{j12}\\[2mm]
A_{j12}^T & A_{j22}    
\end{pmatrix}
\partial_{x_j}
\begin{pmatrix}
\mu_2 \\[2mm]
\nu_2    
\end{pmatrix}
\right].
\end{align*}
In summary, we can write 
$$
\mathcal{L}(U_\epsilon)
=\mathcal{L}(U_{outer})+\mathcal{L}(U_{layer})
=\begin{pmatrix}
    E_1\\[1mm]
    E_2
\end{pmatrix}
$$
with $E_1$ and $E_2$ clearly defined. 
By the regularity Assumption \ref{asp5.1}, we see that $E_2\in L^2([0,T]\times \mathbb{R}^+\times \mathbb{R}^{d-1})$. 
Moreover, since 
$$
\int_{[0,T]\times R^+\times R^{d-1}}\left|(\mu_0,\nu_0)\left(\frac{x_1}{\epsilon},\hat{x},t\right)\right|^2dtdx_1d\hat{x}=
\epsilon\int_{[0,T]\times R^+\times R^{d-1}}\left|(\mu_0,\nu_0)(y,\hat{x},t)\right|^2dtdyd\hat{x},
$$
we have $\|E_1\|_{L^2([0,T]\times R^+\times R^{d-1})}\leq C\sqrt{\epsilon}$.
Consequently, the equation for $W$ is 
\begin{align}\label{tem5.5}
\partial_tW+
\sum\limits_{j=1}^d A_j
\partial_{x_j} W=\frac{1}{\epsilon}
\begin{pmatrix}
0 & 0 \\[2mm]
0 & S    
\end{pmatrix}W 
-
\begin{pmatrix}
E_{1} \\[2mm]
E_{2}   
\end{pmatrix}.
\end{align}

Next we turn to the initial and boundary conditions for $W$. 
Clearly, it follows from \eqref{new:5.4} that 
\begin{align}\label{5.9}
W(x_1,\hat{x},0) \equiv 0.
\end{align}
From the boundary conditions in \eqref{Boundary conditions} and \eqref{BC-asymptotic}, 
it is easy to see that the boundary value of $W$ satisfies 
\begin{eqnarray}\label{tem5.6}
  BW(0,\hat{x},t)=\sqrt{\epsilon}
  \begin{pmatrix}
  \mu_2 \\[2mm]
  \nu_2    
  \end{pmatrix}(0,\hat{x},t)\equiv g(\hat{x},t).
\end{eqnarray} 
By Assumption \ref{asp5.1}, we have $\mu_2(0,\hat{x},t),\nu_2(0,\hat{x},t)\in L^2([0,T]\times \mathbb{R}^{d-1})$ and thereby
\begin{equation*} 
	\|g\|_{L^2([0,T]\times \mathbb{R}^{d-1})}\leq  C\sqrt{\epsilon}.
\end{equation*} 
Combining \eqref{tem5.5}-\eqref{tem5.6}, we have the following IBVP for $W$:
\begin{align*} 
\left\{
{\begin{array}{*{20}l}
  \vspace{1.5mm}\partial_tW+
\sum\limits_{j=1}^d A_j
\partial_{x_j} W=\dfrac{1}{\epsilon}
\begin{pmatrix}
0 & 0 \\[2mm]
0 & S    
\end{pmatrix}W 
-
\begin{pmatrix}
E_{1} \\[2mm]
E_{2}   
\end{pmatrix}, \\[3mm]
BW(0,\hat{x},t)=g(\hat{x},t),\\[2mm]
W(x_1,\hat{x},0)=0.
\end{array}}
\right.
\end{align*}

To estimate $W$, we follow \cite{GKO} and decompose $W=W_1+W_2$. Here $W_1$ satisfies
\begin{align}\label{5.10}
\left\{
{\begin{array}{*{20}l}
  \vspace{1.5mm}\partial_tW_1+
\sum\limits_{j=1}^d A_j
\partial_{x_j} W_1=\dfrac{1}{\epsilon}
\begin{pmatrix}
0 & 0 \\[2mm]
0 & S    
\end{pmatrix}W_1 
-
\begin{pmatrix}
E_{1} \\[2mm]
E_{2}   
\end{pmatrix}, \\[4mm]
R_+^TW_1(0,\hat{x},t)=0,\\[2mm]
W_1(x_1,\hat{x},0)=0,
\end{array}}
\right.
\end{align}
and $W_2$ satisfies
\begin{align}\label{5.11}
\left\{
{\begin{array}{*{20}l}
  \vspace{1.5mm}\partial_tW_2+
\sum\limits_{j=1}^d A_j
\partial_{x_j} W_2=\dfrac{1}{\epsilon}
\begin{pmatrix}
0 & 0 \\[2mm]
0 & S    
\end{pmatrix}W_2, \\[2mm]
BW_2(0,\hat{x},t)=g(\hat{x},t)-BW_1(0,\hat{x},t),\\[2mm]
W_2(x_1,\hat{x},0)=0.
\end{array}}\qquad \qquad 
\right.
\end{align}
Note that the boundary condition for $W_1$ is constructed artificially and the matrix $R_+$ consists of eigenvectors of $A_1$ associated with the positive eigenvalues.
Since $A_1$ is symmetric, 
we can take $R_+$ such that 
$(R_+,R_-,R_0)$ is an orthonormal matrix satisfying
\begin{equation*}
\begin{pmatrix}
    R_+^T \\[1mm]
    R_-^T \\[1mm]
    R_0^T
\end{pmatrix}
A_1
\begin{pmatrix}
    R_+ & R_- & R_0
\end{pmatrix}=
\begin{pmatrix}
    \Lambda_+ & & \\[1mm]
    & \Lambda_- & \\[1mm]
    & & 0
\end{pmatrix}.
\end{equation*}
Here $\Lambda_+$ and $\Lambda_-$ are diagonal matrices whose entries are $n_+$ positive eigenvalues and $(n-n_0-n_+)$ negative eigenvalues of $A_1$. 
Based on this decomposition, we take $R_1=(R_+,R_-)$. Recall Remark  \ref{remark3.3} that the characteristic GKC does not rely on the choice of $R_0$ and $R_1$.

For IBVPs \eqref{5.10} and \eqref{5.11}, we have the following conclusions.

\begin{lemma}\label{lemma5.1}
The IBVP \eqref{5.10} has an unique solution $W_1=W_1(x_1,\hat{x},t)$ satisfying
\begin{align*}
&\max_{t\in[0,T]}\|W_1(\cdot,\cdot,t)\|_{L^2(\mathbb{R}^+\times \mathbb{R}^{d-1})}^2+ \|R_1^TW_1|_{x_1=0}\|_{L^2([0,T]\times \mathbb{R}^{d-1})}^2 \nonumber \\[2mm]
\leq &~ C(T)\bigg(\|E_{1}\|^2_{L^2([0,T]\times \mathbb{R}^+\times \mathbb{R}^{d-1})}
+ \epsilon\|E_{2}\|^2_{L^2([0,T]\times \mathbb{R}^+\times \mathbb{R}^{d-1})}\bigg). 
\end{align*}
Here $C(T)$ is a generic constant depending only on $T$.
\end{lemma}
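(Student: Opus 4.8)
The plan is to carry out a standard energy estimate for the hyperbolic IBVP \eqref{5.10}, exploiting the dissipative structure of the source term $\mathrm{diag}(0,S)$ with $S<0$ and the maximal-dissipativity of the artificial boundary condition $R_+^TW_1|_{x_1=0}=0$. First I would establish existence and uniqueness of $W_1$: since \eqref{5.10} is a symmetric hyperbolic system with a linear lower-order source and homogeneous, maximally dissipative boundary data (note $R_+$ spans exactly the positive-eigenvalue subspace of $A_1$, so the condition $R_+^TW_1=0$ kills the incoming characteristics), this is covered by the classical theory in \cite{GKO,BS}; I would simply invoke it, perhaps after noting that the right-hand side $(E_1,E_2)^T$ lies in $L^2([0,T]\times\mathbb{R}^+\times\mathbb{R}^{d-1})$ by Assumption \ref{asp5.1}.

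The core is the a priori estimate. I would multiply \eqref{5.10} by $2W_1^T$ and integrate over $\{x_1>0\}\times\mathbb{R}^{d-1}$. The time term gives $\frac{d}{dt}\|W_1(\cdot,t)\|_{L^2}^2$. The transport terms $\sum_j A_j\partial_{x_j}W_1$ integrate by parts: the tangential derivatives $j\geq 2$ contribute nothing (no boundary, decay at infinity), while the $x_1$-term produces the boundary integral $-\int_{x_1=0} W_1^T A_1 W_1\,d\hat{x}$. Using the orthonormal decomposition $(R_+,R_-,R_0)$ and the boundary condition $R_+^TW_1|_{x_1=0}=0$, this boundary term equals $-\int_{x_1=0}(R_-^TW_1)^T\Lambda_-(R_-^TW_1)\,d\hat{x}\geq c\|R_-^TW_1|_{x_1=0}\|_{L^2}^2\geq 0$ since $\Lambda_-<0$; combined with the trivial bound for the (absent) $R_+$ and $R_0$ parts, this controls $\|R_1^TW_1|_{x_1=0}\|_{L^2}^2=\|(R_+^TW_1,R_-^TW_1)|_{x_1=0}\|_{L^2}^2$ from below. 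The source term gives $\frac{2}{\epsilon}\int (W_1)_{v}^T S (W_1)_{v}\leq 0$, where $(W_1)_v$ denotes the last $r$ components; this is a favorable (dissipative) term that I would simply discard. Finally the forcing term is bounded by Cauchy–Schwarz: $-2\int W_1^T(E_1,E_2)^T\leq \|W_1\|_{L^2}^2 + \|E_1\|_{L^2}^2 + \|E_2\|_{L^2}^2$. The only subtlety is producing the weight $\epsilon$ in front of $\|E_2\|^2$: here I would instead pair $E_2$ against $(W_1)_v$ and absorb $(W_1)_v$ using the dissipative source term, i.e. write $-2\int (W_1)_v^T E_2 \leq \frac{\delta}{\epsilon}\int |(W_1)_v|^2 + \frac{\epsilon}{\delta}\int|E_2|^2$ and choose $\delta$ small so that $\frac{\delta}{\epsilon}|(W_1)_v|^2$ is absorbed by $-\frac{2}{\epsilon}(W_1)_v^TS(W_1)_v$, leaving the $\epsilon\|E_2\|_{L^2}^2$ contribution; the $E_1$-term is paired against the full $W_1$ in the ordinary way, yielding $\|E_1\|_{L^2}^2$ with no $\epsilon$.

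Collecting terms gives a differential inequality of the form
\[
\frac{d}{dt}\|W_1(\cdot,t)\|_{L^2}^2 + c\|R_1^TW_1|_{x_1=0}(\cdot,t)\|_{L^2}^2 \leq \|W_1(\cdot,t)\|_{L^2}^2 + \|E_1(\cdot,t)\|_{L^2}^2 + \epsilon\|E_2(\cdot,t)\|_{L^2}^2.
\]
Integrating in $t$, using $W_1(\cdot,0)=0$, and applying Grönwall's inequality yields the stated bound with $C(T)=Ce^{T}$. The main obstacle—really the only nonroutine point—is the bookkeeping that gets the factor $\epsilon$ correctly attached to $\|E_2\|^2$ but not to $\|E_1\|^2$; this forces one to split the forcing term into its $u$- and $v$-components and to use the $\tfrac1\epsilon S$-dissipation as an absorbing term rather than throwing it away, so the two pieces of the source must be treated asymmetrically.
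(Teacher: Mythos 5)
Your proposal is correct and follows essentially the same route as the paper: invoke the classical existence theory for the artificially imposed condition $R_+^TW_1|_{x_1=0}=0$, do the standard $L^2$ energy estimate in which the boundary term $-\int W_1^TA_1W_1\,d\hat{x}$ yields $c_1|R_1^TW_1|^2$ at $x_1=0$, and obtain the $\epsilon\|E_2\|^2$ weight by pairing $E_2$ with the last $r$ components of $W_1$ and absorbing them into the $\tfrac{1}{\epsilon}S$-dissipation, followed by Gronwall and a time integration. This matches the paper's proof, including the key asymmetric treatment of $E_1$ and $E_2$.
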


\begin{lemma}\label{lemma5.2}
The IBVP \eqref{5.11} has an unique solution $W_2=W_2(x_1,\hat{x},t)$ satisfying 
\begin{align*}
&\max_{t\in[0,T]}\|W_2(\cdot,\cdot,t)\|_{L^2(\mathbb{R}^+\times \mathbb{R}^{d-1})}^2+ \|R_1^TW_2|_{x_1=0}\|_{L^2([0,T]\times \mathbb{R}^{d-1})}^2 \nonumber \\[2mm]
\leq &~ C(T)\bigg(\|R_1^TW_1|_{x_1=0}\|_{L^2([0,T]\times \mathbb{R}^{d-1})}^2+\|g\|_{L^2([0,T]\times \mathbb{R}^{d-1})}^2\bigg).
\end{align*}
\end{lemma}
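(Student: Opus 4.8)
The plan is to combine the Laplace--Fourier analysis of Section \ref{sec3} with the characteristic GKC to control the non-characteristic boundary trace, and then close with a direct energy estimate in the spirit of \cite{GKO}. Since $W_2$ and its data vanish for $t\le 0$, I would first Laplace-transform \eqref{5.11} in $t$ along a line $\mathrm{Re}\,s=\gamma>0$ and Fourier-transform in $\hat x$. The transform $\widehat W_2=\widehat W_2(x_1,\hat\omega,s)$ solves
\[
A_1\,\partial_{x_1}\widehat W_2=\Big(\tfrac1\epsilon Q-sI-i\sum_{j=2}^d\hat\omega_jA_j\Big)\widehat W_2,\qquad
B\widehat W_2(0,\hat\omega,s)=\widehat g(\hat\omega,s)-B\widehat W_1(0,\hat\omega,s).
\]
Multiplying by $\epsilon$ and setting $y=x_1/\epsilon$, $\xi=\epsilon s$, $\omega_j=\epsilon\hat\omega_j$, $\eta=1$ turns this into exactly the eigenvalue problem \eqref{eq3.1} with $\mathrm{Re}\,\xi=\epsilon\gamma>0$. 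Hence, by the reduction \eqref{eq:3.4}--\eqref{equationM} and Lemma \ref{lem:3.1}, every solution bounded as $y\to\infty$ satisfies $L_1\widehat W_2(y)=R_M^S(\xi,\omega,1)e^{S_My}\kappa$, so that $\widehat W_2(y)=\big(R_1-R_0G_{00}^{-1}G_{01}\big)R_M^S(\xi,\omega,1)e^{S_My}\kappa$; since $BR_0=0$, the boundary condition reduces to $BR_1R_M^S(\xi,\omega,1)\,\kappa=\widehat g-B\widehat W_1(0,\cdot)$.

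Next, the characteristic GKC of Definition \ref{DefGKC}, applied with $\eta=1$ and $\mathrm{Re}\,\xi>0$ (together with the usual normalization of $R_M^S$ and the determinant--singular-value inequalities of Lemma 3.3 in \cite{Y2}), guarantees that the square matrix $BR_1R_M^S(\xi,\omega,1)$ is invertible with $\|(BR_1R_M^S)^{-1}\|$ bounded uniformly in $(\xi,\omega)$ and, crucially, in $\epsilon$ --- this is the single point where the GKC, rather than the mere UKC, is used. Combining this with the elementary bound $\gamma\int_0^\infty|e^{S_My}\kappa|^2\,dy+|\kappa|^2\le C|\kappa|^2$ for the stable exponential and undoing the rescaling, I obtain the frequency-space estimate
\[
(\mathrm{Re}\,s)\,\|\widehat W_2(\cdot,\hat\omega,s)\|_{L^2(\mathbb{R}^+)}^2+\big|R_1^{T}\widehat W_2(0,\hat\omega,s)\big|^2\le C\Big(|\widehat g(\hat\omega,s)|^2+\big|B\widehat W_1(0,\hat\omega,s)\big|^2\Big),
\]
in which only the non-characteristic trace $R_1^{T}\widehat W_2(0)$ appears, as forced by $BR_0=0$. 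Integrating over $\hat\omega\in\mathbb{R}^{d-1}$ and over $\mathrm{Re}\,s=\gamma$, Plancherel's theorem in both variables plus the standard weight-removal step (legitimate because $W_2|_{t\le0}=0$ and $t\in[0,T]$, absorbing $e^{2\gamma T}$ into $C(T)$) yields $\|R_1^{T}W_2|_{x_1=0}\|_{L^2([0,T]\times\mathbb{R}^{d-1})}^2\le C(T)\big(\|g\|^2+\|R_1^{T}W_1|_{x_1=0}\|^2\big)$; here I used that, since $(R_1,R_0)$ is chosen orthonormal, $L_1=R_1^{T}$ and hence $BW_1(0,\cdot)=BR_1R_1^{T}W_1(0,\cdot)$ depends only on $R_1^{T}W_1|_{x_1=0}$, which is what produces that term on the right-hand side.

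Finally, with the boundary trace under control, a direct energy estimate for \eqref{5.11} upgrades this to the $\max_{t\in[0,T]}$ bound. Indeed, multiplying \eqref{5.11} by $W_2^{T}$ and integrating over $\mathbb{R}^+\times\mathbb{R}^{d-1}$ gives $\tfrac{d}{dt}\tfrac12\|W_2(\cdot,\cdot,t)\|_{L^2}^2=\tfrac12\int_{x_1=0}W_2^{T}A_1W_2+\tfrac1\epsilon(W_2,QW_2)$, where $(W_2,QW_2)\le0$ since $S<0$, and, writing $W_2|_{x_1=0}$ in the orthonormal basis $(R_+,R_-,R_0)$ with $R_1=(R_+,R_-)$, the boundary integral is bounded by $\|\Lambda_+\|\,|R_1^{T}W_2|_{x_1=0}|^2$. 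Integrating in $t$ and inserting the trace estimate just obtained yields $\max_{t\in[0,T]}\|W_2(\cdot,\cdot,t)\|_{L^2}^2\le C(T)\big(\|R_1^{T}W_1|_{x_1=0}\|^2+\|g\|^2\big)$, which together with the trace bound is the assertion of the lemma; uniqueness follows from the same energy identity applied to a difference of solutions.

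The main obstacle is twofold. First, every constant must be uniform in $\epsilon$: this is exactly what the characteristic GKC delivers, but it requires careful bookkeeping of the $\epsilon$-powers generated by the rescaling $y=x_1/\epsilon$, $\xi=\epsilon s$ when translating the frequency-space bound back to the physical variables. Second, the characteristic degeneracy of $A_1$ forbids running the argument with $A_1^{-1}$; one must pass through the $(R_0,R_1)$-splitting of Section \ref{sec3}, and correspondingly the boundary term $\int_{x_1=0}W_2^{T}A_1W_2$ controls only $R_1^{T}W_2|_{x_1=0}$, so the conclusion is --- and can only be --- an estimate for this non-characteristic part of the trace. The remaining ingredients (the explicit stable-ODE estimate, Plancherel, the weight removal, and the final energy step) are routine and parallel \cite{GKO}.
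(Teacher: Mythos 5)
Your proposal follows essentially the same route as the paper's proof: Laplace--Fourier transform of \eqref{5.11}, reduction via the $(R_0,R_1)$-splitting of Section \ref{sec3} to the stable subspace of $M$, uniform invertibility of $BR_1R_M^S$ from the characteristic GKC to control the trace $R_1^TW_2|_{x_1=0}$, Parseval plus weight removal, and a closing energy estimate for the interior bound (the paper works directly with $\eta=1/\epsilon$ instead of rescaling, and cites \cite{BS} for existence of $W_2$). The only caveat is the interior term $(\mathrm{Re}\,s)\,\|\widehat W_2\|_{L^2(\mathbb{R}^+)}^2$ claimed in your frequency-space estimate, which would require a uniform spectral bound on $S_M$ that you neither prove nor need --- you never use it, since the $\max_{t}$ bound comes from the energy step, exactly as in the paper.
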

\noindent Recall that $\|E_2\|_{L^2([0,T]\times \mathbb{R}^+\times \mathbb{R}^{d-1})}\leq C$, $\|E_1\|_{L^2([0,T]\times \mathbb{R}^+\times \mathbb{R}^{d-1})}\leq C \sqrt{\epsilon}$, and $\|g\|_{L^2([0,T]\times \mathbb{R}^{d-1})}\leq  C\sqrt{\epsilon}$. 
The above two lemmas immediately give

\begin{theorem}\label{thm5.3}
Under the characteristic GKC, the basic assumptions in Section \ref{Section2}, the regularity Assumption \ref{asp5.1}, and the constant hyperbolicity assumption \cite{BS}, there exists a constant $C(T)>0$ such that the error estimate 
\begin{align*}
\|(U^\epsilon-U_\epsilon)(\cdot,\cdot,t)\|_{L^2(\mathbb{R}^+\times \mathbb{R}^{d-1})}\leq C(T) \epsilon^{1/2} 
\end{align*}
holds for all time $t\in[0,T]$.
\end{theorem}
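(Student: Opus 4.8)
The plan is to read the estimate off from the decomposition $W=U^\epsilon-U_\epsilon=W_1+W_2$ in \eqref{5.10}--\eqref{5.11}, from Lemmas \ref{lemma5.1} and \ref{lemma5.2}, and from the residual/boundary bounds recorded just before the statement, namely $\|E_1\|_{L^2([0,T]\times\mathbb{R}^+\times\mathbb{R}^{d-1})}\le C\sqrt\epsilon$, $\|E_2\|_{L^2([0,T]\times\mathbb{R}^+\times\mathbb{R}^{d-1})}\le C$, and $\|g\|_{L^2([0,T]\times\mathbb{R}^{d-1})}\le C\sqrt\epsilon$. Concretely, Lemma \ref{lemma5.1} gives
$$
\max_{t\in[0,T]}\|W_1(\cdot,\cdot,t)\|_{L^2}^2+\|R_1^TW_1|_{x_1=0}\|_{L^2([0,T]\times\mathbb{R}^{d-1})}^2\le C(T)\big(\|E_1\|_{L^2}^2+\epsilon\|E_2\|_{L^2}^2\big)\le C(T)\epsilon ,
$$
and feeding this trace bound together with $\|g\|_{L^2}\le C\sqrt\epsilon$ into Lemma \ref{lemma5.2} yields $\max_{t\in[0,T]}\|W_2(\cdot,\cdot,t)\|_{L^2}^2\le C(T)\epsilon$. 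The triangle inequality $\|W(\cdot,\cdot,t)\|_{L^2}\le\|W_1(\cdot,\cdot,t)\|_{L^2}+\|W_2(\cdot,\cdot,t)\|_{L^2}$ then delivers the asserted $\epsilon^{1/2}$ bound. So the real work is in Lemmas \ref{lemma5.1} and \ref{lemma5.2}, which I would establish as follows.

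For Lemma \ref{lemma5.1} I would run the plain $L^2$ energy estimate for \eqref{5.10}. Since $A_0=I_n$ and each $A_j$ is symmetric, multiplying \eqref{5.10} by $W_1^T$ and integrating over $\mathbb{R}^+\times\mathbb{R}^{d-1}$ kills the tangential fluxes ($j\ge2$) and leaves the normal boundary term $\tfrac12\int_{x_1=0}W_1^TA_1W_1\,d\hat x$. Expanding $W_1$ in the orthonormal frame $(R_+,R_-,R_0)$ and using the artificially imposed $R_+^TW_1|_{x_1=0}=0$ together with $A_1R_0=0$, this term equals $\tfrac12\int_{x_1=0}(R_-^TW_1)^T\Lambda_-(R_-^TW_1)\,d\hat x$, which is nonpositive because $\Lambda_-<0$; moving it to the left-hand side produces a coercive trace term that controls $\|R_1^TW_1|_{x_1=0}\|_{L^2}^2$ (recall $R_1=(R_+,R_-)$). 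The relaxation term contributes $\tfrac1\epsilon\int(W_1^v)^TSW_1^v\le-c_S\epsilon^{-1}\|W_1^v\|_{L^2}^2$, with $W_1^v$ the last $r$ components of $W_1$. I would then split the forcing $-\int W_1^T(E_1,E_2)^T\,dx$: the part $-\int(W_1^v)^TE_2\,dx$ is absorbed into the $\epsilon^{-1}$-dissipation by Young's inequality, which is precisely what produces the weight $\epsilon\|E_2\|^2$; the part $-\int(W_1^u)^TE_1\,dx$ is handled by an ordinary Young inequality producing $\|E_1\|^2$. A standard Gronwall argument starting from $W_1(\cdot,\cdot,0)=0$ closes the estimate, and integrating the trace term over $[0,T]$ gives the boundary part.

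For Lemma \ref{lemma5.2} the substance is the $\epsilon$-uniform a priori estimate for the IBVP \eqref{5.11}: the homogeneous relaxation system with zero initial data and inhomogeneous boundary data $g-BW_1|_{x_1=0}$. This is exactly where the characteristic GKC is used. Following \cite{Y2} — and, for the characteristic geometry, the treatment of characteristic IBVPs in \cite{MO} — the characteristic GKC of Definition \ref{DefGKC} is what allows one to build a (degenerate) Kreiss symmetrizer, equivalently to obtain the dissipative estimate underpinning the energy method of \cite{GKO}, with constants uniform as $\epsilon\to0$; the constant hyperbolicity hypothesis ensures the smoothness of the eigenprojections needed in that construction. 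One thus gets $\max_{t\in[0,T]}\|W_2\|_{L^2}^2+\|R_1^TW_2|_{x_1=0}\|_{L^2}^2\le C(T)\|g-BW_1|_{x_1=0}\|_{L^2}^2$. Since $BR_0=0$ and $R_1=(R_+,R_-)$, one has $\|BW_1|_{x_1=0}\|_{L^2}\le C\|R_1^TW_1|_{x_1=0}\|_{L^2}$, so the right-hand side is bounded by $C(T)\big(\|g\|_{L^2}^2+\|R_1^TW_1|_{x_1=0}\|_{L^2}^2\big)$, which is the form stated in Lemma \ref{lemma5.2}.

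The main obstacle is Lemma \ref{lemma5.2}: securing a boundary-data a priori estimate for the relaxation IBVP that is uniform in $\epsilon$ and robust at a characteristic boundary. The characteristic GKC was designed for precisely this, but carrying the Kreiss-type machinery through the relaxation scaling and the characteristic degeneracy at once is the delicate point; by contrast, Lemma \ref{lemma5.1} and the final assembly are routine energy bookkeeping, the only mild subtlety being the correct $\epsilon$-weighting of the $E_2$ forcing.
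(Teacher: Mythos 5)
Your overall architecture coincides with the paper's: the same decomposition $W=U^\epsilon-U_\epsilon=W_1+W_2$ via \eqref{5.10}--\eqref{5.11}, the same two lemmas as the real content, and the same final bookkeeping with $\|E_1\|\le C\sqrt{\epsilon}$, $\|E_2\|\le C$, $\|g\|\le C\sqrt{\epsilon}$. Your treatment of Lemma \ref{lemma5.1} is essentially the paper's proof verbatim: symmetric-system energy identity, boundary term controlled through the orthonormal frame $(R_+,R_-,R_0)$ and $R_+^TW_1|_{x_1=0}=0$, the $E_2$ forcing absorbed into the $\epsilon^{-1}$ dissipation (producing the weight $\epsilon\|E_2\|^2$), Gronwall, and the trace term recovered by integrating in time. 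That part is fine.

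The genuine gap is Lemma \ref{lemma5.2}, which you yourself flag as the main obstacle and then dispose of by asserting that the characteristic GKC ``allows one to build a (degenerate) Kreiss symmetrizer'' with constants uniform as $\epsilon\to0$. No such construction is given, and it is not routine: the frequency-domain symbol here depends on the extra relaxation parameter $\eta=1/\epsilon$ ranging over all of $[0,\infty)$, and the boundary matrix $A_1$ is singular, so neither the classical Kreiss symmetrizer nor the Majda--Osher characteristic construction applies off the shelf; constant hyperbolicity gives smoothness of eigenprojections in $(\xi,\omega)$ for a fixed system but says nothing about uniformity in $\eta$. The paper does not build a symmetrizer at all. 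Instead it proves Lemma \ref{lemma5.2} directly on the transform side: Fourier in $\hat{x}$ and Laplace in $t$ turn \eqref{5.11} into the ODE system of Section \ref{sec3} with $\eta=1/\epsilon$, namely $\partial_{x_1}(R_1^T\widehat{W}_2)=M(\xi,\omega,\eta)(R_1^T\widehat{W}_2)$ together with the algebraic relation for $R_0^T\widehat{W}_2$; boundedness in $x_1$ forces the unstable component to vanish, so $R_1^T\widehat{W}_2(0)=R_M^S R_M^{S*}R_1^T\widehat{W}_2(0)$, and the characteristic GKC of Definition \ref{DefGKC} gives a bound on $R_M^S(BR_1R_M^S)^{-1}$ uniform in $(\xi,\omega,\eta)$, hence a boundary-trace estimate for $R_1^TW_2$ after Parseval and the causality trick of \cite{GKO} to pass from $[0,\infty)$ to $[0,T]$; the interior bound then follows from a plain energy estimate because the boundary flux satisfies $W_2^TA_1W_2=(R_1^TW_2)^T(R_1^TA_1R_1)(R_1^TW_2)$ thanks to $A_1R_0=0$. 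To make your argument complete you would either have to carry out the uniform degenerate-symmetrizer construction (substantially harder than anything else in the proof, and not what the paper does) or replace that step by this direct Laplace--Fourier argument, for which the GKC delivers the needed uniformity essentially for free.
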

\noindent From this theorem and the expansion of $U_\epsilon$ in \eqref{3.1}, we can immediately see the $L^2$-convergence of $U^\epsilon$ to the solution $(\bar{u},\bar{v})$ of the equilibrium system with the reduced boundary condition \eqref{reduced-BC} as $\epsilon$ goes to zero.

It remains to prove Lemma \ref{lemma5.1} and Lemma \ref{lemma5.2}, which is similar to those in \cite{ZY3}. For completeness, we present the details here. 

\begin{proof}[Proof of Lemma \ref{lemma5.1}]
Clearly, the boundary condition $R_+^TW_1(0,\hat{x},t)=0$ satisfies the Uniform Kreiss Condition. Thus it follows from the existence theory in \cite{BS} that there exists a unique solution $W_1\in C([0,T];L^2(\mathbb{R}^+\times \mathbb{R}^{d-1}))$.

For the estimate, we multiply \eqref{5.10} with $W_1^T$ from the left to obtain
$$
\frac{d}{dt}(W_1^TW_1)+\sum_{j=1}^d(W_1^TA_jW_1)_{x_j}= \dfrac{2}{\epsilon}
W_1^T
\begin{pmatrix}
0 & 0 \\[2mm]
0 & S    
\end{pmatrix}
W_1-2(W_1^{I})^TE_1-2(W_1^{II})^TE_{2}.
$$
Here $W_1^I$ represents the first $(n-r)$ components of $W_1$ and $W_1^{II}$ represents the other $r$ components. From the last equation, we use the negative-definiteness of $S$ and derive
\begin{align*}
\frac{d}{dt}(W_1^TW_1)+\sum_{j=1}^d(W_1^TA_jW_1)_{x_j}\leq & -\frac{c_0}{\epsilon}|W_1^{II}|^2+ 2|(W_1^{II})^TE_{2}| + 2|(W_1^{I})^TE_{1}| \\[2mm]
\leq &  -\frac{c_0}{2\epsilon}|W_1^{II}|^2 + \frac{2 \epsilon}{c_0}|E_{2}|^2 +|E_1|^2 + |W_1|^2 
\end{align*}
with $c_0 > 0$ a constant. Integrating the last inequality over $(x_1,\hat{x})\in[0,+\infty)\times \mathbb{R}^{d-1}$ yields
\begin{align}
&~\frac{d}{dt}\|W_1(\cdot,\cdot,t)\|_{L^2(\mathbb{R}^+\times \mathbb{R}^{d-1})}^2 - \int_{\mathbb{R}^{d-1}}W_1^T(0,\hat{x},t) A_1W_1(0,\hat{x},t) d\hat{x}\nonumber \\[2mm]
 \leq &~\frac{2 \epsilon}{c_0}\|E_{2}\|_{L^2(\mathbb{R}^+\times \mathbb{R}^{d-1})}^2 +  \|E_1\|_{L^2(\mathbb{R}^+\times \mathbb{R}^{d-1})}^2 + \|W_1\|_{L^2(\mathbb{R}^+\times \mathbb{R}^{d-1})}^2  .\label{5.14}
\end{align}
Since $A_1=R_+\Lambda_+R_+^T + R_-\Lambda_-R_-^T$ and $R_+^TW_1(0,\hat{x},t)=0$, it follows that 
\begin{align}
-W_1^T(0,\hat{x},t)A_1W_1(0,\hat{x},t)=& -W_1^T(0,\hat{x},t)(R_+\Lambda_+R_+^T + R_-\Lambda_-R_-^T)W_1(0,\hat{x},t)\nonumber\\[2mm]
                 =& -W_1^T(0,\hat{x},t) R_-\Lambda_-R_-^T W_1(0,\hat{x},t) \nonumber\\[2mm]
                 \geq & ~c_1W_1^T(0,\hat{x},t) R_- R_-^T W_1(0,\hat{x},t) \nonumber\\[2mm]
                 =& ~c_1|R_1^TW_1(0,\hat{x},t)|^2\label{5.15}
\end{align}
with $c_1$ a positive constant.
Applying Gronwall's inequality to \eqref{5.14}, we have 
\begin{align*}
 &\max_{t\in[0,T]}\|W_1(\cdot,\cdot,t)\|_{L^2(\mathbb{R}^+\times \mathbb{R}^{d-1})}^2 
\leq Ce^{CT}\bigg( \epsilon\|E_{2}\|^2_{L^2([0,T] \times \mathbb{R}^+\times \mathbb{R}^{d-1})} + \|E_{1}\|^2_{L^2([0,T] \times \mathbb{R}^+\times \mathbb{R}^{d-1})} \bigg). 
\end{align*}
At last, we use \eqref{5.15} and integrate \eqref{5.14} over $t\in[0,T]$ to obtain
\begin{align*}
& \|R_1^TW_1|_{x_1=0}\|_{L^2([0,T]\times \mathbb{R}^{d-1})}^2 
\leq C(T) \bigg(  \epsilon\|E_{2}\|^2_{L^2([0,T] \times \mathbb{R}^+\times \mathbb{R}^{d-1})} + \|E_{1}\|^2_{L^2([0,T] \times \mathbb{R}^+\times \mathbb{R}^{d-1})} \bigg). 
\end{align*}
This completes the proof of Lemma \ref{lemma5.1}.
\end{proof}

\begin{proof}[Proof of Lemma \ref{lemma5.2}]
By Remark \ref{remark3.1}, the Uniform Kreiss Condition is implied by the characteristic GKC. Thus the existence theory in \cite{BS} indicates that there exists a unique solution $W_2\in C([0,T];L^2(\mathbb{R}^+\times \mathbb{R}^{d-1}))$.

Next we adapt the method in \cite{GKO} to obtain the estimate. Define the Fourier transform of the solution to \eqref{5.11} with respect to $\hat{x}$:
$$
\widetilde{W}_2(x_1,\omega,t)=\int_{\mathbb{R}^{d-1}}e^{i \omega \hat{x}}W_2(x_1,\hat{x},t)d\hat{x}, \qquad \omega=(\omega_2,\omega_3,...,\omega_d)\in \mathbb{R}^{d-1}
$$
and the Laplace transform with respect to $t$:
$$
\widehat{W}_2(x_1,\omega,\xi)=\int_0^{\infty}e^{-\xi t}\widetilde{W}_2(x_1,\omega,t)dt,\qquad Re \xi>0. 
$$
Then we deduce from \eqref{5.11} that 
\begin{align}\label{5.16}
\left\{
{\begin{array}{*{20}l}
\vspace{2mm}A_1\partial_{x_1}\widehat{W}_2 = \left(\eta Q - \xi I_n -  i \sum_{j=2}^d\omega_j A_j\right)\widehat{W}_2,\\[2mm]
\vspace{2mm} B\widehat{W}_2(0,\omega,\xi)=\widehat{g}(\omega,\xi)-B\widehat{W}_1(0,\omega,\xi),\\[2mm]
\|\widehat{W}_2(\cdot,\omega,\xi)\|_{L^2(\mathbb{R}^+)}<\infty \quad \text{for ~a.e.} ~~\xi,~\omega. 
\end{array}}
\right.
\end{align}
Here $\eta = 1/\epsilon$. 
From Section \ref{sec3}, the differential equations in \eqref{5.16} can be reduced to
\begin{align}
    \partial_{x_1}(R_1^T\widehat{W}_2) =&~ M(\xi,\omega,\eta)(R_1^T\widehat{W}_2),\label{R1W2-ODE}\\[2mm]
    R_0^T\widehat{W}_2 =&~ -G_{00}^{-1}(\xi,\omega,\eta)G_{01}(\xi,\omega,\eta)(R_1^T\widehat{W}_2).
\end{align}
Since $BR_0=0$ and $I_n=R_0^TR_0+R_1^TR_1$, the boundary condition can be written as
\begin{equation}\label{Laplace-BC1}
    (BR_1)(R_1^T\widehat{W}_2)(0,\omega,\xi)=\widehat{g}(\omega,\xi)-(BR_1)(R_1^T\widehat{W}_1)(0,\omega,\xi).
\end{equation}

Let $R_M^S=R_M^S(\xi,\omega,\eta)$ and $R_M^U=R_M^U(\xi,\omega,\eta)$ be the respective right-stable and right-unstable matrices of $M=M(\xi,\omega,\eta)$:
\begin{align*}
MR_M^S=R_M^SM^S,\quad\quad 
MR_M^U=R_M^UM^U,
\end{align*}
where $M^S$ is a stable-matrix and $M^U$ is an unstable-matrix. 
In view of the Schur decomposition, we may choose $R_M^S$ and $R_M^U$ such that
$$
\begin{pmatrix}
R_M^{S*} \\[2mm]
R_M^{U*}    
\end{pmatrix}
\begin{pmatrix}
R_M^{S} & 
R_M^{U}    
\end{pmatrix}
=I_n.
$$
Recall that the superscript $*$ denotes the conjugate transpose.
Then from \eqref{R1W2-ODE} we obtain
\begin{align*}
\begin{pmatrix}
R_M^{S*} \\[2mm]
R_M^{U*}    
\end{pmatrix}\partial_{x_1}(R_1^T\widehat{W}_2)= 
\begin{pmatrix}
M^S & \\[2mm]
    & M^U 
\end{pmatrix}
\begin{pmatrix}
R_M^{S*} \\[2mm]
R_M^{U*}    
\end{pmatrix}
(R_1^T\widehat{W}_2).
\end{align*}
Since $\|\widehat{W}_2(\cdot,\omega,\xi)\|_{L^2(\mathbb{R}^+)}<\infty$ for a.e. $(\xi,\omega)$ and $M^U$ is an unstable-matrix, it must be
\begin{equation*}
   R_M^{U*}(R_1^T\widehat{W}_2)=0.
\end{equation*}
Thus the boundary condition in \eqref{Laplace-BC1} becomes
\begin{align*}
BR_1(R_1^T\widehat{W}_2)(0,\omega,\xi)= BR_1R_M^{S}R_M^{S*}(R_1^T\widehat{W}_2)(0,\omega,\xi)=&~\widehat{g}(\omega,\xi)-BR_1(R_1^T\widehat{W}_1)(0,\omega,\xi). 
\end{align*}
Since the matrix $(BR_1R_M^S)^{-1}$ is uniformly bounded due to the characteristic GKC, we conclude that
\begin{align*}
\left|(R_1^T\widehat{W}_2)(0,\omega,\xi)\right|^2=& \left|R_M^{S}R_M^{S*}(R_1^T\widehat{W}_2)(0,\omega,\xi)\right|^2\\[2mm]
=&\left|R_M^{S}(BR_1R_M^S)^{-1}\left[\widehat{g}(\omega,\xi)-BR_1(R_1^T\widehat{W}_1)(0,\omega,\xi)\right]\right|^2\\[2mm]
\leq & ~C\left(|\widehat{g}(\omega,\xi)|^2+|(R_1^T\widehat{W}_1)(0,\omega,\xi)|^2\right).
\end{align*}

By Parseval's identity, the last inequality leads to 
\begin{align*} 
&\int_{\mathbb{R}^{d-1}}\int_0^{\infty}e^{-2t Re\xi }|(R_1^TW_2)(0,\hat{x},t)|^2dtd\hat{x} \\[2mm]
\leq&~  C\bigg(\int_{\mathbb{R}^{d-1}}\int_0^{\infty}e^{-2t Re\xi }\left|g(\hat{x},t)\right|^2dtd\hat{x}+\int_{\mathbb{R}^{d-1}}\int_0^{\infty}e^{-2t Re\xi }\left|(R_1^TW_1)(0,\hat{x},t)\right|^2dtd\hat{x}\bigg)\\[2mm]
\leq&~ C\bigg(\int_{\mathbb{R}^{d-1}}\int_0^{\infty} \left|g(\hat{x},t)\right|^2dtd\hat{x}+\int_{\mathbb{R}^{d-1}}\int_0^{\infty} \left|(R_1^TW_1)(0,\hat{x},t)\right|^2dtd\hat{x}\bigg)\quad\text{for}~~Re\xi>0.
\end{align*}
Because the right-hand side is independent of $Re\xi$, we have 
\begin{align*}
&~\int_{\mathbb{R}^{d-1}}\int_0^{\infty} |(R_1^TW_2)(0,\hat{x},t)|^2dtd\hat{x} \\[2mm]
\leq&~ C\bigg(\int_{\mathbb{R}^{d-1}}\int_0^{\infty} \left|g(\hat{x},t)\right|^2dtd\hat{x}+\int_{\mathbb{R}^{d-1}}\int_0^{\infty} \left|(R_1^TW_1)(0,\hat{x},t)\right|^2dtd\hat{x}\bigg).
\end{align*}
By using the trick from \cite{GKO}, the integral interval $[0,\infty)$ in the last inequality can be changed to $[0,T]$:
\begin{align}
\|R_1^TW_2|_{x_1=0}\|_{L^2([0,T]\times \mathbb{R}^{d-1})}^2
\leq & ~C \bigg(\|R_1^TW_1|_{x_1=0}\|_{L^2([0,T]\times \mathbb{R}^{d-1})}^2+\|g\|_{L^2([0,T]\times \mathbb{R}^{d-1})}^2\bigg). \label{5.17}
\end{align}

At last, we multiply the equation in \eqref{5.11} with $W_2^T$ from the left to get
\begin{align*}
\frac{d}{dt}(W_2^TW_{2})+\partial_{x_1}(W_2^TA_1
 W_2)+\sum\limits_{j=2}^d \partial_{x_j}(W_2^TA_j
 W_2)\leq 0.
\end{align*}
Integrating the last inequality over $(x_1,\hat{x})\in[0,+\infty)\times \mathbb{R}^{d-1}$ and $t\in[0,T]$ yields
\begin{align*}
\max_{t\in[0,T]}\|W_2(\cdot,\cdot,t)\|_{L^2(\mathbb{R}^+\times \mathbb{R}^{d-1})}^2\leq &~ C\int_{\mathbb{R}^{d-1}}\int_0^{T} \left|(W_2^TA_1W_2)(0,\hat{x},t)\right|^2dtd\hat{x} \\[2mm]
= &~ C\int_{\mathbb{R}^{d-1}}\int_0^{T} \left|(W_2^TR_1)(R_1^TA_1R_1)(R_1^TW_2)(0,\hat{x},t)\right|^2dtd\hat{x} \\[2mm]
\leq &~ C\int_{\mathbb{R}^{d-1}}\int_0^{T} |(R_1^TW_2)(0,\hat{x},t)|^2dtd\hat{x}.
\end{align*} 
This together with \eqref{5.17} completes the proof.
\end{proof}

\section*{\normalsize{Acknowledgments}} 
The first author is funded by Alexander von Humboldt Foundation (Humboldt Research Fellowship Programme for Postdocs). The second author is supported by National Key Research and Development Program of China (Grant 2021YFA0719200) and National Natural Science Foundation of China (Grant 12071246).

\end{document}